\documentclass[twoside]{article}

\usepackage{amsmath,amssymb,amsthm}
\usepackage[accepted]{aistats2021}

\usepackage{bm}
\usepackage{graphicx,float,wrapfig}
\usepackage{subcaption}
\usepackage{wrapfig}
\usepackage{ifpdf}
\usepackage{listings}
\usepackage{diagbox}
\usepackage{algorithm}
\usepackage{algorithmicx}
\usepackage[noend]{algpseudocode}
\usepackage{enumitem}
\usepackage{multirow}
\usepackage{multicol}
\usepackage{color}
\usepackage{soul}
\usepackage{hhline}
\usepackage{thm-restate}
\usepackage{graphicx}
\usepackage[font=small,labelfont=bf]{caption}
\usepackage{hyperref}
\usepackage[round]{natbib}

\newcommand{\real}{\mathbb{R}}
\newcommand{\realp}{\Re}
\newcommand{\imag}{\Im}
\newcommand{\comp}{\mathbb{C}}

\newcommand{\bx}{\mathbf{x}}
\newcommand{\by}{\mathbf{y}}
\newcommand{\bA}{\mathbf{A}}
\newcommand{\bB}{\mathbf{B}}
\newcommand{\bC}{\mathbf{C}}
\newcommand{\bigo}{\mathcal{O}}
\newcommand{\set}{\mathcal{S}}
\newcommand{\bz}{\mathbf{z}}
\newcommand{\bb}{\mathbf{b}}

\newcommand{\zero}{\mathbf{0}}

\newcommand{\jacobian}{\mathbf{J}}
\newcommand{\iden}{\mathbf{I}}

\newtheorem{theorem}{Theorem}
\newtheorem{rem}{Remark}

\newtheorem{prop}{Proposition} 
\newtheorem{assumption}{Assumption} 
\newtheorem{examp}{Example}

\declaretheorem[name=Lemma]{lemma}
\newtheorem{definition}{Definition}

\DeclareMathOperator*{\argmin}{arg\,min}

\newenvironment{tightcenter}{%
  \setlength\topsep{0pt}
  \setlength\parskip{3pt}
  \begin{center}
}{%
  \end{center}
}

\definecolor{mygreen}{rgb}{0.1,0.9,0.1}
\definecolor{mydarkred}{rgb}{0.7,0.1,0}
\hypersetup{colorlinks=true,
    linkcolor=mygreen,
    citecolor=mygreen,
    filecolor=mygreen,
    urlcolor=mygreen}

\begin{document}

\twocolumn[

\aistatstitle{On the Suboptimality of Negative Momentum \\ for Minimax Optimization}

\aistatsauthor{ Guodong Zhang \And Yuanhao Wang }

\aistatsaddress{ University of Toronto, Vector Institute \And  Princeton University } ]

\begin{abstract}
Smooth game optimization has recently attracted great interest in machine learning as it generalizes the single-objective optimization paradigm. 
However, game dynamics is more complex due to the interaction between different players and is therefore fundamentally different from minimization, posing new challenges for algorithm design.
Notably, it has been shown that negative momentum is preferred due to its ability to reduce oscillation in game dynamics. Nevertheless, the convergence rate of negative momentum was only established in simple bilinear games. 
In this paper, we extend the analysis to smooth and strongly-convex strongly-concave minimax games by taking the variational inequality formulation. By connecting Polyak’s momentum with Chebyshev polynomials, we show that negative momentum accelerates convergence of game dynamics locally, though with a suboptimal rate. To the best of our knowledge, this is the \emph{first work} that provides an explicit convergence rate for negative momentum in this setting.
\end{abstract}

\section{Introduction}
Due to the increasing popularity of generative adversarial networks~\citep{goodfellow2014generative, radford2015unsupervised, arjovsky2017wasserstein}, adversarial training~\citep{madry2018towards} and primal-dual reinforcement learning~\citep{du2017stochastic, dai2018sbeed}, minimax optimization (or generally game optimization) has gained significant attention as it offers a flexible paradigm that goes beyond ordinary loss function minimization.
In particular, our problem of interest is the following minimax optimization problem:
\begin{equation}
    \min_{\bx \in \mathcal{X}} \max_{\by \in \mathcal{Y}} f(\bx, \by).
\end{equation}
We are usually interested in finding a \emph{Nash equilibrium}~\citep{von1944theory}: a set of parameters from which no player can (locally and unilaterally) improve its objective function.
Though the dynamics of gradient based methods are well understood for minimization problems, new issues and challenges appear in minimax games. For example, the na{\"\i}ve extension of gradient descent can fail to converge~\citep{letcher2019differentiable, mescheder2017numerics} or converge to undesirable stationary points~\citep{mazumdar2019finding, adolphs2019local, wang2019solving}.

Another important difference between minimax games and minimization problems is that negative momentum value is preferred for improving convergence~\citep{gidel2019negative}. To be specific, for the blinear case $f(\bx, \by) = \bx^\top \bA \by$, negative momentum with alternating updates converges to $\epsilon$-optimal solution with an iteration complexity of $\bigo(\kappa)$ where the condition number $\kappa$ is defined as $\kappa = \frac{\lambda_\mathrm{max}(\bA^\top \bA)}{\lambda_\mathrm{min}(\bA^\top \bA)}$, whereas Gradient Descent Ascent (GDA) fails to converge. Moreover, the rate of negative momentum matches the optimal rate of Extra-gradient (EG)~\citep{Korpelevich1976TheEM} and Optimistic Gradient Descent Ascent (OGDA)~\citep{daskalakis2018training, mertikopoulos2018optimistic}. 
A natural question to ask then is:
\begin{tightcenter}
    \emph{Does negative momentum improve on GDA \\ for other settings?}
\end{tightcenter}
In this paper, we extend the analysis of negative momentum\footnote{Throughout the paper, negative momentum represents gradient descent-ascent with negative momentum.} to the strongly-convex strongly-concave setting
and answer the above question in the affirmative. 
In particular, we observe that momentum methods~\citep{polyak1964some}, either positive or negative, can be connected to Chebyshev iteration~\citep{manteuffel1977tchebychev} in solving linear systems, which enables us to derive the optimal momentum parameter and asymptotic convergence rate.
With optimally tuned parameters, negative momentum achieves an acceleration \emph{locally} with an improved iteration complexity $\bigo(\kappa^{1.5})$ as opposed to the $\bigo(\kappa^2)$ complexity of Gradient Descent Ascent (GDA). Following on that, we further ask:
\begin{tightcenter}
    \emph{Is negative momentum optimal in the same setting? \\ Does it match the iteration complexity of EG and OGDA again?}
\end{tightcenter}
We answer these questions in the negative. Particularly, our analysis implies that the iteration complexity lower bound for negative momentum is $\Omega(\kappa^{1.5})$. Nevertheless, the optimal iteration complexity for this family of problems under first-order oracle is $\Omega(\kappa)$~\citep{ibrahim2019linear, zhang2019lower}, which can be achieved by EG and OGDA. Therefore, we \emph{for the first time} show that negative momentum alone is suboptimal for strongly-convex strongly-concave minimax games. To the best of our knowledge, this is the first work that provides an explicit convergence rate for negative momentum in this setting.

\textbf{Organization}. In Section~\ref{sec:preliminary}, we define our notation and formulate minimax optimization as a variational inequality problem. Under the variational inequality framework, we further write first-order methods as discrete dynamical systems and show that we can safely linearize the dynamics for proving local convergence rates (thus simplifying the problem to that of solving linear systems).
In Section~\ref{sec: poly-based-methods}, we discuss the connection between first-order methods and polynomial approximation and show that we can analyze the convergence of a first-order method through the sequence of polynomials it defines. In Section~\ref{sec:main-result}, we prove the local convergence rate of negative momentum for minimax games by connecting it with Chebyshev polynomials, showing that it has a suboptimal rate locally. Finally, in Section~\ref{sec:simulation}, we validate our claims in simulation.

\section{Preliminaries}\label{sec:preliminary}
\textbf{Notation}. In this paper, scalars are denoted by lower-case letters (e.g., $\lambda$), vectors by lower-case bold letters (e.g., $\bz$), matrices by upper-case bold letters (e.g., $\jacobian$) and operators by upper-case letters (e.g., $F$). 
The superscript $^\top$ represents the transpose of a vector or a matrix.
The spectrum of a square matrix $\bA$ is denoted by $\text{Sp}(\bA)$, and its eigenvalue by $\lambda$. We use $\realp$ and $\imag$ to denote the real part and imaginary part of a complex scalar respectively. 
We use $\real$ and $\comp$ to denote the set of real numbers and complex numbers, respectively.
We use $\rho(\bA) = \lim_{n \rightarrow \infty} \|\bA^n\|^{1/n}$ to denote the spectral radius of matrix $\bA$. $\bigo$, $\Omega$ and $\Theta$ are standard asymptotic notations. We use $\Pi_t$ to denote the set of real polynomials with degree no more than $t$.

\begin{table*}[t]
    \centering
    \caption{First-order algorithms for smooth and strongly-monotone games.}
    \begin{tabular}{ |c|c|c|c| } 
     \hline
     Method & Parameter Choice & Complexity & Reference \\ \hline
     GDA & $\alpha = 0$, $\beta = 0$ & $\bigo(\kappa^2)$ & \small{\citet{ryu2016primer, azizian2020tight}} \\ \hline
     OGDA & $\alpha = 1$, $\beta = 0$ & $\bigo(\kappa)$ & \small{\citet{gidel2018variational, mokhtari2020unified}} \\ \hline
     NM & $\alpha = 0$, $\beta < 0$  & $\Theta(\kappa^{1.5})$ & \textbf{This paper} (Theorem~\ref{thm:main-result})\\ \hline
    \end{tabular}
    \label{tab:first-order}
\end{table*}
\subsection{Variational Inequality Formulation of Minimax Optimization}
We begin by presenting the basic variational inequality framework that we will consider throughout the paper. To that end, let $\mathcal{Z}$ be a nonempty convex subset of $\real^d$, and let $F: \real^d \rightarrow \real^d$ be a continuous mapping on $\real^d$. In its most general form, the variational inequality (VI) problem~\citep{harker1990finite} associated to $F$ and $\mathcal{Z}$ can be stated as:
\begin{equation}\label{eq:variational-inequality}
    \text{find } \bz^* \in \mathcal{Z} \;\; \text{s.t.} \;\; F(\bz^*)^\top(\bz - \bz^*) \geq 0 \;\; \forall \bz \in \mathcal{Z}.
\end{equation}
In the case of $\mathcal{Z} = \real^d$, the problem is reduced to finding $\bz^*$ such that $F(\bz^*) = 0$. To provide some intuition about the variational inequality problem, we discuss two important examples below:
\begin{examp}[Minimization]
Suppose that $F = \nabla_\bz f$ for a smooth function $f$ on $\real^d$, then the varitional inequality problem is essentially finding the critical points of $f$. In the case where $f$ is convex, any solution of \eqref{eq:variational-inequality} would be a global minimum.%
\end{examp}
\begin{examp}[Minimax Optimization]\label{examp:minimax}
Consider the convex-concave minimax optimization (or saddle-point optimization) problem, where the objective is to solve the following problem
\begin{equation}
    \min_\bx \max_\by f(\bx, \by), \; \text{where } f \text{ is a smooth function.}
\end{equation}
One can show that it is a special case of \eqref{eq:variational-inequality} with $F(\bz) = [\nabla_\bx f(\bx, \by)^\top, -\nabla_\by f(\bx, \by)^\top]^\top$.
\end{examp}
Notably, the vector field $F$ in Example~\ref{examp:minimax} is not necessarily conservative, i.e., it
might not be the gradient of any function. In addition, since $f$ in minimax problem happens to be convex-concave, any solution $\bz^* = [{\bx^*}^\top, {\by^*}^\top]^\top$ of \eqref{eq:variational-inequality} is a global \emph{Nash Equilibrium}~\citep{von1944theory},~i.e.,
\begin{equation*}
    f(\bx^*, \by) \leq f(\bx^*, \by^*) \leq f(\bx, \by^*) \quad \forall \bx \text{ and } \by \in \real^d.
\end{equation*}
In this work, we are particularly interested in the case of $f$ being a strongly-convex-strongly-concave and smooth function, which essentially assumes that the vector field $F$ is strongly-monotone and Lipschitz (see~\citet[Lemma~2.6]{fallah2020optimal}). Here we state our assumptions formally.
\begin{assumption}[Strongly Monotone]\label{ass:monotonicity}
    The vector field $F$ is $\mu$-strongly-monotone:
    \begin{equation}
        (F(\bz_1) - F(\bz_2))^\top (\bz_1 - \bz_2) \geq \mu \|\bz_1 - \bz_2 \|_2^2, \; \forall \bz_1, \bz_2 \in \real^d.
    \end{equation}
\end{assumption}
\begin{assumption}[Lipschitz]\label{ass:lipschitz}
    The vector field $F$ is L-Lipschitz if the following holds:
    \begin{equation}
        \|F(\bz_1) - F(\bz_2) \|_2 \leq L \|\bz_1 - \bz_2 \|_2, \; \forall \bz_1, \bz_2 \in \real^d.
    \end{equation}
\end{assumption}
In the context of variational inequalites, Lipschitzness and (strong) monotonicity are fairly standard and have been used in many classical works~\citep{tseng1995linear, chen1997convergence, nesterov2007dual, nemirovski2004prox}. With these two assumptions in hand, we define the condition number $\kappa \triangleq L / \mu$, which measures the hardness of the problem. In the following, we turn to suitable optimization techniques for the variational inequality problem.

\subsection{First-order methods for Minimax Optimization}
The dynamics of optimization algorithms are often described by a vector field, $F$, and local convergence behavior can be understood in terms of the spectrum of its Jacobian. In minimization, the Jacobian coincides with the Hessian of the loss with all eigenvalues real. 
In minimax optimization, the Jacobian is nonsymmetric and can have complex eigenvalues, making it harder to analyze.

In the case of strongly-convex strongly-concave minimax games, finding the Nash equilibrium is equivalent to solving the fixed point equation $F(\bz^*) = \zero$.
Here, we mainly focus on first-order methods~\citep{nesterov1983method} to find the stationary point $\bz^*$:
\begin{definition}[First-order methods]\label{def:first-order-methods}
A first-order method generates
\begin{equation}\label{eq:first-order}
    \bz_t \in \bz_0 + \textbf{Span}\{F(\bz_0), ..., F(\bz_{t-1}) \}.
\end{equation}
\end{definition}
This wide class includes most gradient-based optimization methods we are interested in, such as GDA, OGDA and momentum method. All three methods are special case of the following update:
\begin{equation}
    \bz_{t+1} = (1 + \beta) \bz_{t} - \beta \bz_{t-1} - \eta F((1 + \alpha)\bz_t - \alpha \bz_{t-1}),
\end{equation}
where $\beta$ is the momentum parameter, $\alpha$ the extrapolation parameter and $\eta$ the step size. With proper choices of parameters, we can recover GDA, OGDA and negative momentum (see Table~\ref{tab:first-order}). For instance, the update rule of negative momentum is given by
\begin{equation}
    \label{eq:neg-momentum-def}
    \bz_{t+1} = (1 + \beta) \bz_{t} - \beta \bz_{t-1}-\eta F(\bz_t).
\end{equation}
For smooth and strongly-monotone games, \citet[Corollary 1]{azizian2020accelerating} showed a lower bound on convergence rate for \emph{any} algorithm of the form~\eqref{eq:first-order}:
\begin{equation}\label{eq:lower-bound}
\begin{aligned}
    & \|\bz_k - \bz^* \|_2 \geq \rho_{\text{opt}}^k \|\bz_0 - \bz^*\|_2 \\ & \text{with} \quad \rho_{\text{opt}} = 1 - \frac{2\mu}{\mu + L}.
\end{aligned}
\end{equation}

\subsection{Dynamical System Viewpoint and Local Convergence}
With a first-order algorithm defined, we study local convergence rates from the viewpoint of dynamical system.
It is well-known that gradient-based methods can reliably find local stable fixed points (i.e., local minima) in single-objective optimization.
Here, we generalize the concept of stability to games by taking game dynamics as a discrete dynamical system.
An iteration of the form $\bz_{t+1}=G(\bz_t)$ can be viewed as a discrete dynamical system. If $G(\bz^*)=\bz^*$, then $\bz^*$ is called a fixed point. We study the stability of fixed points as a proxy to local convergence of game dynamics.
\begin{definition}\label{def:stable-fixed-points}
Let $\jacobian_G$ denote the Jacobian of $G$ at a fixed point $\bz^*$. If it has spectral radius $\rho(\jacobian_G)\le 1$, then we call $\bz^*$ a stable fixed point. If $\rho(\jacobian_G)<1$, then we call $\bz^*$ a strictly stable fixed point.
\end{definition}
It has been shown that strict stability implies local convergence (see~\citet{galor2007discrete}). In other words, if $\bz^*$ is a strictly stable fixed point, there exists a neighborhood $U$ of $\bz^*$ such that when initialized in $U$, the iteration steps always converge to $\bz^*$.
\begin{rem}
    Because we focus on local convergence rates, we can safely take the Jacobian $\jacobian$ as constant locally, which essentially linearizes the vector field $F(\bz) = \bA\bz + \bb, \bA = \jacobian_F(\bz^*)$. Therefore, locally solving the minimax game becomes arguably as easy as solving the linear system $\bA\bz + \bb = \zero$.
\end{rem}

\subsection{Chebyshev Polynomials}\label{subsec:chebyshev}
The Chebyshev polynomials were discovered a century ago by the mathematician Chebyshev. 
Since then, they have found many uses in numerical analysis~\citep{fox1968chebyshev, mason2002chebyshev}. The Chebyshev polynomials can be defined recursively as%
\begin{equation}\label{eq:cheby-recursion}
\begin{aligned}
    T_0(z) &= 1, \; T_1(z) = z, \\
    T_{n+1}(z) &= 2z T_n(z) - T_{n-1}(z).
\end{aligned}
\end{equation}
They may also be written as
\begin{equation}
    T_n(z) = 
    \begin{cases}
    \cos (n \arccos (z)) & \quad \text{if } -1 \leq z \leq 1 \\
    \cosh (n \cosh^{-1} (z)) & \quad \text{otherwise}
    \end{cases}.
\end{equation}
For the case of $z \notin [-1, 1]$, we have the Chebyshev polynomials $T_n(z) = \cosh (n \cosh^{-1} (z))$. Consider the map $\eta = \cosh(\sigma)$, let $\sigma = x + y i$ and $\eta = u + v i$. Then $\cosh(\sigma) = \cosh(x + y i) = u + v i = \eta$. By the property of $\cosh$, we have
\begin{equation}
    \cosh(x + y i) = \cosh(x) \cos(y) + \sinh(x) \sin(y) i.
\end{equation}
If we fix $x = \mathrm{const}$ (with varying $y$), then we have
$\frac{u^2}{\cosh(x)^2} + \frac{v^2}{\sinh(x)^2} = 1$.
That is, $\cosh$ maps the vertical line $x = \mathrm{const}$ to an ellipse with semi-major axis $|\cosh(x)|$, semi-minor axis $|\sinh(x)|$ and foci at $+1$ and $-1$. This map has the period $2\pi i$.

\section{Polynomial-based Iterative Methods}\label{sec: poly-based-methods}
In the background section, we showed that solving minimax games locally boils down to solving a linear system. Here, we leverage the well-established theory of polynomial approximation for efficiently solving linear systems. The next lemma shows that when the vector field $F$ is linear, first-order algorithms defined in \eqref{eq:first-order} can be written as polynomials.
\begin{lemma}[\citet{fischer2011polynomial}] \label{lem:first-order}
If the vector field is of the form of $F(\bz) = \bA\bz + \bb$, then $\bz_t$ generated by first-order methods can be written as
\begin{equation}\label{lem:poly}
    \bz_t - \bz^* = p_t(\bA)(\bz_0 - \bz^*),
\end{equation}
where $\bz^*$ satisfies $\bA\bz^* + \bb = \zero$ and $p_t \in \Pi_t$ is a polynomial with degree at most $t$ that satisfies $p_t(0) = 1$.
\end{lemma}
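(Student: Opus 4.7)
The plan is to prove the claim by induction on $t$, exploiting the fact that once $\bz^*$ satisfies $\bA\bz^* + \bb = \zero$, the vector field simplifies along the iteration to $F(\bz_s) = \bA\bz_s + \bb = \bA(\bz_s - \bz^*)$. This is the single algebraic identity that lets an affine map be turned into a matrix polynomial acting on the error $\bz_0 - \bz^*$. The base case $t=0$ is immediate: set $p_0(z) \equiv 1$, so that $p_0 \in \Pi_0$, $p_0(0) = 1$, and $\bz_0 - \bz^* = p_0(\bA)(\bz_0 - \bz^*)$.

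For the inductive step, assume the statement for every $s \le t$, with associated polynomials $p_0, p_1, \dots, p_t$. By Definition~\ref{def:first-order-methods}, membership of $\bz_{t+1}$ in $\bz_0 + \textbf{Span}\{F(\bz_0), \dots, F(\bz_t)\}$ means there exist scalars $c_0, \dots, c_t$ with $\bz_{t+1} = \bz_0 + \sum_{s=0}^{t} c_s F(\bz_s)$. Subtracting $\bz^*$, plugging in $F(\bz_s) = \bA(\bz_s - \bz^*)$, and then using the inductive hypothesis $\bz_s - \bz^* = p_s(\bA)(\bz_0 - \bz^*)$ gives
\[
\bz_{t+1} - \bz^* = \left(\iden + \sum_{s=0}^{t} c_s \bA\, p_s(\bA)\right)(\bz_0 - \bz^*).
\]
I would then define the candidate polynomial $p_{t+1}(z) := 1 + \sum_{s=0}^{t} c_s\, z\, p_s(z)$ and verify the two required properties: evaluating at $z=0$ kills every term under the sum and leaves $p_{t+1}(0)=1$, while the degree bound follows from $\deg(z\, p_s(z)) \le 1 + s \le t+1$, so $p_{t+1} \in \Pi_{t+1}$. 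This closes the induction.

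There is no substantive obstacle here beyond bookkeeping; the only subtlety is translating the abstract span condition of Definition~\ref{def:first-order-methods} into an explicit linear combination whose coefficients can be absorbed into the polynomial construction, and ensuring the affine term $\bb$ is handled by passing to the error variable $\bz_s - \bz^*$ rather than $\bz_s$. Everything else is a routine algebraic manipulation, which is why the result in \citet{fischer2011polynomial} is stated as a starting point rather than as a theorem requiring new techniques.
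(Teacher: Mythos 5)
Your proof is correct, and it is precisely the standard inductive argument; the paper itself gives no proof, citing the result directly from \citet{fischer2011polynomial}. The key steps — rewriting $F(\bz_s) = \bA\bz_s + \bb = \bA(\bz_s - \bz^*)$ via the fixed-point equation, unfolding the span condition of Definition~\ref{def:first-order-methods} into an explicit linear combination, absorbing the coefficients into $p_{t+1}(z) = 1 + \sum_{s=0}^t c_s\, z\, p_s(z)$, and checking $p_{t+1}(0)=1$ together with the degree bound — are exactly what one finds in the cited source, so there is nothing to reconcile.
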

To gain better intuition about the above Lemma, we provide an example of gradient descent below.
\begin{examp}[Gradient Descent]
    For gradient descent with constant learning rate $\eta$, the corresponding polynomials are given by
    \begin{equation}
        p_t(\bA) = (\iden - \eta \bA)^t.
    \end{equation}
\end{examp}
Hence, the convergence of a first-order method can be analyzed through the sequence of polynomials $p_t$ it generates. Specifically, we can bound the error of $\|\bz_t - \bz^*\|_2$ in a  as follows:
\begin{equation}\label{eq: upper-bound}
    \|\bz_t - \bz^*\|_2 \leq \|p_t(\bA)\|_2 \| \bz_0 - \bz^*\|_2,
\end{equation}
where $\|p_t(\bA)\|_2^{1/t} = (\max_{\lambda \in \text{Sp}(\bA)} |p_t(\lambda)|)^{1/t}$ as $t \rightarrow \infty$.
Importantly, the error depends on two factors: the polynomial (algorithm) $p_t$ and the matrix (problem) $\bA$. 
In practice, we are interested in the performance of a algorithm on a broad class of problem, therefore we instead consider a set $\set_K$ of matrices $\bA$:
\begin{equation}
    \set_K := \{\bA \in \real^{d \times d}: \text{Sp}(\bA) \in K \in \comp \}
\end{equation}
Clearly, an obvious choice for the residual polynomial $p_t$ is the one which minimizes the upper bound in~\eqref{eq: upper-bound}. This optimal polynomial $\mathcal{P}_t(\lambda, K)$ is the solution of the following Chebyshev approximation problem
\begin{equation*}
    \max_{\lambda \in K} |\mathcal{P}_t(\lambda; K)| = \min \left\{\max_{\lambda \in K} |p(\lambda)|: \; p \in \Pi_t, p(0) = 1 \right\}.
\end{equation*}
To measure the performance, we define the \emph{asymptotic convergence factor}~\citep{eiermann1983construction} with the following form:
\begin{equation}
    \rho(K) = \lim_{t \rightarrow \infty} \left( \max_{\lambda \in K} |\mathcal{P}_t(\lambda; K)|  \right)^{1/t} .
\end{equation}
It was shown that the asymptotic convergence factor also serves as a \textbf{lower bound} in the worse-case \citep{nevanlinna1993convergence} depending on the set $K$\footnote{It is the fastest possible asymptotic convergence rate a first-order method can achieve for all linear systems with spectrum in the set.}. 
\begin{prop}[\cite{nevanlinna1993convergence}]
Let $K$ be a subset of $\comp$ symmetric w.r.t the real axis, that does not contain the origin. Then, any oblivious first-order method (whose coefficients only depend on $K$, see~\citet{arjevani2016iteration}) satisfies the following
\begin{equation*}
    \forall t > 0, \exists \bz_0, \exists \bA: \|\bz_t - \bz^* \|_2 \geq \rho(K) \|\bz_0 - \bz^* \|_2.
\end{equation*}
\end{prop}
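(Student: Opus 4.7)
The plan is to reduce the lower bound to a polynomial approximation statement by invoking Lemma~\ref{lem:first-order}, then construct an adversarial instance whose spectrum realizes the worst case of the algorithm's residual polynomial on $K$, and finally bound that worst case from below using the optimality of $\mathcal{P}_t(\,\cdot\,;K)$ together with a submultiplicativity argument.

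\textbf{Reduction to polynomials.} For an oblivious first-order method, the update coefficients depend only on $K$, so the residual polynomial $p_t \in \Pi_t$ with $p_t(0)=1$ produced by Lemma~\ref{lem:first-order} on a linear instance $F(\bz)=\bA\bz+\bb$ is determined by $K$ alone, independently of the particular $(\bA,\bz_0,\bb)$. It therefore suffices to exhibit, for this fixed $p_t$, a matrix $\bA$ with $\text{Sp}(\bA)\subset K$ and a starting point $\bz_0$ such that $\|\bz_t-\bz^*\|_2/\|\bz_0-\bz^*\|_2 \geq \rho(K)$ (in the stronger pointwise form, $\geq \rho(K)^t$).

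\textbf{Construction of the hard instance.} Pick $\lambda^\star \in K$ that (approximately) attains $\max_{\lambda\in K}|p_t(\lambda)|$. Because $K$ is symmetric about the real axis, $\overline{\lambda^\star}\in K$ as well, so I can form a real normal matrix $\bA$ whose spectrum is $\{\lambda^\star,\overline{\lambda^\star}\}$: a $1\times 1$ block if $\lambda^\star$ is real, and a $2\times 2$ scaled-rotation block otherwise. For such a real normal $\bA$, $\|p_t(\bA)\|_2=\max_{\lambda\in \text{Sp}(\bA)}|p_t(\lambda)|=|p_t(\lambda^\star)|$, and choosing $\bz_0-\bz^*$ in the invariant real two-plane of $\bA$ makes this norm attained. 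Consequently
\begin{equation*}
\|\bz_t-\bz^*\|_2 \;\geq\; \max_{\lambda\in K}|p_t(\lambda)|\cdot\|\bz_0-\bz^*\|_2.
\end{equation*}

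\textbf{Reduction to $\rho(K)$.} The Chebyshev minimax characterization gives $\max_{\lambda\in K}|p_t(\lambda)| \geq \max_{\lambda\in K}|\mathcal{P}_t(\lambda;K)| =: M_t$ for every residual polynomial $p_t$. The sequence $\{M_t\}$ is submultiplicative, $M_{s+t}\leq M_s M_t$, because a product of two residual polynomials of degrees $s$ and $t$ is a residual polynomial of degree $s+t$ with constant term $1$. Fekete's lemma then yields $M_t^{1/t}\geq \lim_s M_s^{1/s}=\rho(K)$, i.e.\ $M_t\geq \rho(K)^t$ for all $t$, which combined with the previous display delivers the claimed lower bound.

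\textbf{Main obstacle.} The key technical step is the realization in paragraph two: turning an arbitrary (possibly complex) worst-case eigenvalue $\lambda^\star\in K$ into the spectrum of a \emph{real} matrix without losing the operator-norm identity $\|p_t(\bA)\|_2=|p_t(\lambda^\star)|$. The symmetry of $K$ about the real axis is exactly what makes the $2\times 2$ rotation-scaling construction real and normal; without this hypothesis one would either need complex matrices or pay an extra non-normality factor. A minor secondary subtlety is attainment of $\max_{\lambda\in K}|p_t(\lambda)|$ when $K$ is not compact, which is handled by passing to $\varepsilon$-maximizers and letting $\varepsilon\to 0$.
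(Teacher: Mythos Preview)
The paper does not supply its own proof of this proposition; it is quoted from \citet{nevanlinna1993convergence} without argument, so there is no in-paper proof to compare your attempt against.

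On its own merits your argument is correct. The three ingredients are all sound: (i) obliviousness fixes the residual polynomial $p_t$ independently of the particular instance, so Lemma~\ref{lem:first-order} reduces the question to a property of $p_t$ alone; (ii) the symmetry of $K$ about the real axis lets you realize any (approximate) maximizer $\lambda^\star$ together with $\overline{\lambda^\star}$ as the spectrum of a real $2\times 2$ scaled-rotation block, which is normal, so that in fact $\|p_t(\bA)\bv\|_2=|p_t(\lambda^\star)|\,\|\bv\|_2$ for \emph{every} nonzero $\bv$ (not just a special eigenvector), making the lower bound immediate; and (iii) the sequence $M_t=\max_{\lambda\in K}|\mathcal P_t(\lambda;K)|$ is submultiplicative because a product of residual polynomials is again a residual polynomial, and Fekete's lemma then gives $M_t\ge\rho(K)^t$. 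Combining these yields $\|\bz_t-\bz^*\|_2\ge\rho(K)^t\|\bz_0-\bz^*\|_2$, which is the standard form of this lower bound and matches the shape of~\eqref{eq:lower-bound}; the display in the proposition as printed is missing the exponent $t$, so your ``stronger pointwise form'' is really the intended reading.
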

Interestingly, if the set $K$ is simple enough, we can compute the asymptotic convergence factor and the optimal polynomial. In particular, when $K$ is a complex ellipse in the complex plane which does not contain the origin in its interior, the following result is known in the literature~\citep{clayton1963further,wrigley1963accelerating,manteuffel1977tchebychev, azizian2020accelerating}.%
\begin{theorem}\label{thm: negative-momentum}
If the set of $K$ is a complex ellipse with the following form:
\begin{equation}\label{eq:ellipse}
\begin{aligned}
    K =& \left\{\lambda \in \comp: E_{a, b, d}(\lambda) \triangleq \frac{(\realp\lambda - d)^2}{a^2} + \frac{(\imag\lambda)^2}{b^2} \leq 1 \right\}, \\ & \text{with } d > a > 0, b > 0.
\end{aligned}
\end{equation}
then one can show its asymptotically optimal polynomial is a rescaled and translated Chebyshev polynomial:
\begin{equation}\label{eq: optimal-poly}
    \mathcal{P}_t(\lambda; K) = \frac{T_t\left(\frac{d - \lambda}{c} \right)}{T_t\left(\frac{d}{c} \right)}, \; c^2 = a^2 - b^2.
\end{equation}
Moreover, the asymptotic convergence factor (i.e., the maximum modulus) is achieved on the boundary:
\begin{equation}
    \rho(K) = 
    \begin{cases}
    a/d       & \quad \text{if } a = b \\
    \frac{d - \sqrt{d^2 + b^2 - a^2}}{a - b}  & \quad \text{otherwise}
  \end{cases}.
\end{equation}
\end{theorem}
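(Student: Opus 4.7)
The plan is to reduce the problem to a canonical Chebyshev approximation on an ellipse with foci at $\pm 1$ via an affine change of variables, exploit the exact modulus of $T_t$ on such an ellipse to obtain the stated rate, and invoke a Bernstein--Walsh-type bound to show this rate is asymptotically optimal.

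First, assume $a \neq b$ and set $c = \sqrt{a^2 - b^2}$ (real if $a>b$, purely imaginary otherwise). The affine map $w = (d-\lambda)/c$ sends $K$ to an ellipse $\tilde K$ centered at the origin with semi-axes $a/c$ and $b/c$, whose foci sit exactly at $\pm 1$ since $(a/c)^2 - (b/c)^2 = 1$. The origin $\lambda = 0$ goes to $w_0 = d/c$, which lies strictly outside $\tilde K$ because $d > a$. Under this correspondence, the candidate polynomial $\mathcal{P}_t(\lambda;K)$ in \eqref{eq: optimal-poly} becomes $q_t(w) = T_t(w)/T_t(w_0)$, which automatically satisfies $q_t(w_0) = 1$, matching the constraint $\mathcal{P}_t(0;K) = 1$.

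Second, I would compute the maximum modulus of $q_t$ on $\tilde K$ directly from the identity $T_t(\cosh\sigma) = \cosh(t\sigma)$. Parametrize $\tilde K$ by $w = \cosh(x_0 + iy)$ with $\cosh(x_0) = a/c$, $\sinh(x_0) = b/c$, $y \in [0,2\pi)$. Using the formula $\cosh(x+iy) = \cosh x \cos y + i \sinh x \sin y$ from Section~\ref{subsec:chebyshev}, one derives
\begin{equation*}
    |T_t(w)|^2 = \cosh^2(tx_0)\cos^2(ty) + \sinh^2(tx_0)\sin^2(ty) = \cosh^2(tx_0) - \sin^2(ty),
\end{equation*}
which is maximized at $y=0$, giving $\max_{w\in\tilde K}|T_t(w)| = \cosh(tx_0) = T_t(a/c)$. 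Similarly $T_t(w_0) = \cosh(tx_1)$ where $x_1 = \cosh^{-1}(d/c)$, so $\max_{\lambda\in K}|\mathcal{P}_t(\lambda;K)| = \cosh(tx_0)/\cosh(tx_1)$.

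Third, extract the asymptotic rate using $\cosh(ts)^{1/t} \to e^s$. Since $e^{x_0} = \cosh(x_0) + \sinh(x_0) = (a+b)/c$ and $e^{x_1} = (d + \sqrt{d^2 - c^2})/c = (d + \sqrt{d^2+b^2-a^2})/c$, one gets
\begin{equation*}
    \rho(K) = e^{x_0 - x_1} = \frac{a+b}{d + \sqrt{d^2+b^2-a^2}},
\end{equation*}
and rationalizing the denominator by multiplying by $(d-\sqrt{d^2+b^2-a^2})/(d-\sqrt{d^2+b^2-a^2})$ yields $(d-\sqrt{d^2+b^2-a^2})/(a-b)$, matching the theorem. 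The degenerate case $a=b$ ($K$ a disk of radius $a$ centered at $d$) is handled separately by the explicit choice $\mathcal{P}_t(\lambda) = (1-\lambda/d)^t$, for which $\max_K|\mathcal{P}_t| = (a/d)^t$.

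The main obstacle is the asymptotic optimality, which is not algebraic: I would establish it through a Bernstein--Walsh bound. The Joukowski parametrization $w = (z + 1/z)/2$ exhibits the conformal map of $\mathbb{C}\setminus\tilde K$ onto $\{|z| > e^{x_0}\}$, so the Green's function of the exterior with pole at infinity is $g(w) = \log|z| - x_0$, which equals $x_1 - x_0$ at $w_0$. Bernstein--Walsh then asserts that every $q \in \Pi_t$ with $\|q\|_{\tilde K} \le 1$ satisfies $|q(w_0)| \le e^{t\,g(w_0)}$, so any $q$ with $q(w_0) = 1$ must satisfy $\|q\|_{\tilde K} \ge e^{-t(x_1-x_0)} = \rho(K)^t$, which matches the achievability bound. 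This step is the one that genuinely uses potential theory rather than direct computation, and I would either cite the classical result or reprove it here using the explicit Joukowski map already at hand.
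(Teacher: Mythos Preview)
Your proposal is correct, and the computation of the modulus of $T_t$ on the ellipse via the $\cosh$ parametrization mirrors the paper's argument closely. The genuine divergence is in how you establish asymptotic optimality. The paper argues by a zero-counting contradiction: if some competitor $p_t$ with $p_t(0)=1$ beat $q_t$ uniformly on the boundary $B$, then $|q_t|>|p_t|$ on $B$, Rouch\'e's theorem forces $q_t-p_t$ to have $t$ zeros inside the ellipse (since $q_t$ does), and the extra zero at the origin outside $K$ gives $t+1$ roots for a degree-$t$ polynomial. This sandwich, together with the observation that $\min_B|q_t|^{1/t}$ and $\max_B|q_t|^{1/t}$ share the same limit, pins down the asymptotic convergence factor. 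Your route through Bernstein--Walsh and the Green's function of the ellipse complement (computed explicitly via the Joukowski map) is equally valid and arguably more conceptual: it identifies $\rho(K)$ directly as $e^{-g(w_0)}$ and makes the extremality of Chebyshev polynomials a consequence of a general potential-theoretic inequality rather than an ad hoc root count. The tradeoff is that the paper's Rouch\'e argument is fully self-contained and elementary, whereas yours imports a classical but nontrivial theorem; on the other hand, your approach would generalize immediately to other regions $K$ once the Green's function is known, while the Rouch\'e step leans on the specific fact that all $t$ zeros of $q_t$ lie inside the ellipse.
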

\begin{rem}
    We note that $c$ can be either pure real or pure imaginary. In either case, $c^2$ is real and throughout the paper $c$ only appears as $c^2$.
\end{rem}
Notably, the first-order method corresponding to the optimal polynomial \eqref{eq: optimal-poly} is Polyak momentum.
\begin{restatable}{cor}{cornm}\label{cor:nm}
    The optimal first-order methods for $K$ in the form of \eqref{eq:ellipse} iterates as follows:
    \begin{equation}
        \bz_{t+1} = \bz_t - \eta_t F(\bz_t) + \beta_t (\bz_t - \bz_{t-1}),
    \end{equation}
    where $\eta_t$ and $\beta_t$ are not constant over time. However, by choosing constants $\eta = 2\frac{d - \sqrt{d^2 - c^2}}{c^2}$ and $\beta = d \eta - 1$,
    we can obtain the same asymptotic rate.
\end{restatable}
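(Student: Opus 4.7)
The plan is to derive the iteration by rewriting the three-term Chebyshev recurrence in the variable $\lambda$, pushing it through Lemma~\ref{lem:first-order}, and then taking a $t\to\infty$ limit to freeze the coefficients.

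First, I would define $\omega_t := T_t(d/c)$ and apply the Chebyshev recurrence~\eqref{eq:cheby-recursion} to $T_t\bigl(\tfrac{d-\lambda}{c}\bigr)$; after dividing by $\omega_{t+1}$ and using $\omega_{t+1} = 2(d/c)\omega_t - \omega_{t-1}$ to eliminate the $\omega_{t-1}/\omega_{t+1}$ ratio, the residual polynomial $\mathcal{P}_t(\lambda) = T_t((d-\lambda)/c)/T_t(d/c)$ satisfies
\begin{equation*}
\mathcal{P}_{t+1}(\lambda) = (1+\beta_t)\mathcal{P}_t(\lambda) - \eta_t \lambda\, \mathcal{P}_t(\lambda) - \beta_t \mathcal{P}_{t-1}(\lambda),
\end{equation*}
with $\eta_t = \tfrac{2}{d}\cdot\tfrac{d}{c}\cdot\tfrac{\omega_t}{\omega_{t+1}}$ and $\beta_t = \eta_t d - 1$. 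Lemma~\ref{lem:first-order} says $\bz_t - \bz^* = \mathcal{P}_t(\bA)(\bz_0-\bz^*)$; since $F(\bz) = \bA\bz+\bb$ with $F(\bz^*)=\zero$ gives $\bA(\bz_t-\bz^*) = F(\bz_t)$, applying the polynomial recurrence to $(\bz_0 - \bz^*)$ yields exactly the Polyak-momentum form $\bz_{t+1} = \bz_t - \eta_t F(\bz_t) + \beta_t(\bz_t - \bz_{t-1})$, proving the first part of the corollary.

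Next, I would compute the $t\to\infty$ limits of $\eta_t,\beta_t$. The standard Chebyshev asymptotic $T_t(d/c) \sim \tfrac12\bigl(\tfrac{d + \sqrt{d^2 - c^2}}{c}\bigr)^t$ (valid for $d/c>1$, and analytically continued when $c^2<0$) gives $\omega_t/\omega_{t+1} \to c/(d + \sqrt{d^2-c^2})$, hence
\begin{equation*}
\eta_\infty = \frac{2}{d + \sqrt{d^2 - c^2}} = \frac{2\bigl(d - \sqrt{d^2 - c^2}\bigr)}{c^2}, \qquad \beta_\infty = d\eta_\infty - 1,
\end{equation*}
matching the stated constants after rationalizing.

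Finally I need to verify that the constant-coefficient iteration $\bz_{t+1} = (1+\beta)\bz_t - \beta\bz_{t-1} - \eta F(\bz_t)$ attains the \emph{same} asymptotic factor $\rho(K)$. Its generating polynomial obeys $p_{t+1}(\lambda) = (1+\beta-\eta\lambda)p_t(\lambda) - \beta p_{t-1}(\lambda)$. Substituting $p_t = \beta^{t/2} q_t$ reduces this to the Chebyshev recursion $q_{t+1} = 2z\, q_t - q_{t-1}$ with $z = (1+\beta-\eta\lambda)/(2\sqrt{\beta})$, so $|p_t(\lambda)|^{1/t} \to \sqrt{\beta}\,|z + \sqrt{z^2-1}|$. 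Using the identities $1+\beta - \eta d = 0$ and $4\beta = \eta^2 c^2$ (both direct from the formulas for $\eta,\beta$), a short calculation shows that as $\lambda$ traces the ellipse $K$, the variable $z$ traces an ellipse centered at $0$ with foci exactly at $\pm 1$; on such an ellipse $|z + \sqrt{z^2-1}|$ is constant and equals $(A+B)$ with $A,B$ its semi-axes, giving $\sqrt{\beta}(A+B) = (d-\sqrt{d^2-c^2})/(a-b) = \rho(K)$, and in the limit $a\to b$ it reduces to $a/d$.

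The main obstacle is the last step: the time-varying version inherits optimality from Chebyshev automatically, but with frozen coefficients one must track the image of $K$ under $\lambda\mapsto z$ and recognize the Joukowski-style identity $|z + \sqrt{z^2-1}| = A+B$ on any confocal ellipse with foci $\pm 1$. The two algebraic identities $1+\beta = \eta d$ and $4\beta = \eta^2 c^2$ are precisely what forces the image ellipse to be confocal at $\pm 1$, so they are the heart of the argument.
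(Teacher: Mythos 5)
Your proposal is correct and arrives at the same constants, but the last step---showing the constant-coefficient iteration attains the asymptotic rate $\rho(K)$---takes a genuinely different route from the paper's. You do a direct characteristic-root analysis: substitute $p_t = \beta^{t/2} q_t$ to reduce the momentum recurrence to the three-term Chebyshev form, identify $z = (d-\lambda)/c$ via the identities $1+\beta = \eta d$ and $4\beta = \eta^2 c^2$, and invoke the Joukowski fact that $|z+\sqrt{z^2-1}|$ is constant on a confocal ellipse with foci $\pm 1$. The paper instead imports Lemma~\ref{lem:rho-regime} from \citet{niethammer1983analysis}, which directly parametrizes the $\rho$-convergence region $S(\eta,\beta,\rho)$ of Polyak momentum as a complex ellipse, and then merely checks that plugging in the limiting $\eta,\beta$ recovers $K$. (Also, for the intermediate extraction of the limiting constants, the paper finds the fixed point of the coefficient recursion $\eta_t = [d - (c/2)^2\eta_{t-1}]^{-1}$, whereas you take the $t\to\infty$ limit of the Chebyshev asymptotic $T_t(d/c)\sim\tfrac12((d+\sqrt{d^2-c^2})/c)^t$; both give $\eta_\infty = 2(d-\sqrt{d^2-c^2})/c^2$.) Your verification is more self-contained---it does not rely on the Niethammer--Varga characterization---but costs a longer calculation, and you should write $\sqrt{|\beta|}$ rather than $\sqrt{\beta}$ in the final modulus computation, since in the negative-momentum regime $\beta<0$ and $\sqrt{\beta}$ is imaginary (the algebra still goes through, and indeed $\sqrt{|\beta|}(A+B)$ reduces to $(d-\sqrt{d^2+b^2-a^2})/(a-b)$ after using $c^2 = a^2-b^2$). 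The paper's approach buys brevity by delegating the ellipse geometry to a known result; yours makes the confocal-ellipse mechanism explicit, which better exposes why $1+\beta=\eta d$ and $4\beta = \eta^2 c^2$ are exactly the tuning conditions needed.
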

\begin{rem}\label{rem:optimal-mom}
The optimal momentum parameter $\beta$ is
\begin{equation}\label{eq:optim-beta}
    \beta = 2\tfrac{d^2 - d\sqrt{d^2 - c^2}}{c^2} - 1 = 2\tfrac{d^2 - \frac{1}{2}c^2 - \sqrt{d^2 (d^2 - c^2)}}{c^2}.
\end{equation}
In particular, we note that the numerator of \eqref{eq:optim-beta} is always non-negative. Therefore, we conclude that the optimal momentum parameter is negative when $c^2 = a^2 - b^2 < 0$ and positive when $c^2 > 0$.
\end{rem}
We note that the eigenvalues of the Jacobian for the minimization problem lie on the real axis, which is a special case of a complex ellipse with $b = 0$. In that case, it is known that Polyak momentum has an optimal worst-case convergence rate over the class of first order methods~\citep{polyak1987introduction}. In the special case of $K$ being a disc, we have the optimal algorithm being gradient descent~\citep{eiermann1985study}.
\begin{restatable}{cor}{corgda}
\label{cor:gda}
    For the case of $a = b$, i.e., K is a disc in the complex plane, the optimal polynomial is 
    \begin{equation}
        \mathcal{P}_t(\lambda; K) = (1 - \lambda / d)^t,
    \end{equation}
    and the optimal algorithm is gradient descent.
\end{restatable}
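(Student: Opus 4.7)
The plan is to pass to the degenerate limit of Theorem~\ref{thm: negative-momentum} at $c^2 = a^2 - b^2 = 0$ and separately argue optimality. Using the leading-order asymptotics $T_t(z) = 2^{t-1} z^t + O(z^{t-2})$ for large $|z|$, the two factors in \eqref{eq: optimal-poly} have a well-defined ratio as $c \to 0$:
\begin{equation*}
\mathcal{P}_t(\lambda; K) \;=\; \frac{T_t((d-\lambda)/c)}{T_t(d/c)} \;\xrightarrow{\,c \to 0\,}\; \left(\frac{d-\lambda}{d}\right)^{\!t} \;=\; (1 - \lambda/d)^t .
\end{equation*}
This candidate is in $\Pi_t$, satisfies $p_t(0) = 1$, and on the disc $K = \{|\lambda - d| \le a\}$ its modulus $|d-\lambda|^t / d^t$ attains its maximum $(a/d)^t$ on the boundary. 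Consistency check: this matches the factor $\rho(K) = a/d$ given by Theorem~\ref{thm: negative-momentum} in the case $a = b$.

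For optimality I would use a conformal-map argument. Consider the M\"obius map $w = a/(d - \lambda)$, which sends the complement of the interior of $K$ (in the Riemann sphere) bijectively to the closed unit disc, with $\lambda = 0 \mapsto w = a/d \in (0,1)$ and $\lambda = \infty \mapsto w = 0$. For any competitor $q \in \Pi_t$ with $q(0) = 1$ and $\max_{K} |q| \le (a/d)^t$, define
\begin{equation*}
    h(w) \;:=\; q(\lambda(w)) \cdot (dw/a)^t , \qquad \lambda(w) = d - a/w .
\end{equation*}
Expanding $q(d - a/w)$ in powers of $1/w$ and multiplying by $w^t$ shows $h$ is a polynomial of degree at most $t$ in $w$. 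On $|w| = 1$ we have $|h(w)| = |q(\lambda)| \cdot (d/a)^t \le 1$, hence $|h| \le 1$ on the closed unit disc by the maximum modulus principle, while $h(a/d) = q(0) = 1$ attains the maximum at an interior point. Therefore $h \equiv 1$, which inverts to $q(\lambda) = (1 - \lambda/d)^t$, proving both uniqueness and optimality.

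Finally, by Lemma~\ref{lem:first-order} the polynomial produced by gradient descent with step size $\eta$ is $(\iden - \eta \bA)^t$, which coincides with the optimal polynomial exactly when $\eta = 1/d$; the specialization $c^2 \to 0$ of Corollary~\ref{cor:nm} gives $\eta \to 1/d$ and $\beta = d\eta - 1 \to 0$, confirming the algorithm is indeed gradient descent. The main obstacle is the optimality step: the limit in the first paragraph produces only a candidate, and one needs the conformal map above (or, alternatively, continuity of the min-max value $\rho(K)$ in $(a,b)$ combined with Theorem~\ref{thm: negative-momentum} applied for small $c \neq 0$) to rule out strictly better polynomials of degree $\le t$.
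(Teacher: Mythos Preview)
Your proof is correct. The limit computation in your first paragraph is exactly the paper's argument: the paper simply evaluates $\lim_{c\to 0} T_t((d-\lambda)/c)/T_t(d/c)$ using the leading behavior of $T_t$ and reads off $(1-\lambda/d)^t$, relying implicitly on Theorem~\ref{thm: negative-momentum} for optimality. Where you diverge is in your second paragraph: the paper does not supply any separate optimality step, whereas you give a self-contained M\"obius/maximum-modulus argument showing that any $q\in\Pi_t$ with $q(0)=1$ and $\max_K|q|\le (a/d)^t$ must coincide with $(1-\lambda/d)^t$. This buys strictly more than the paper proves: you obtain exact optimality and uniqueness for every finite $t$, not merely the asymptotic statement that the convergence factor matches $\rho(K)=a/d$. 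You are also right that the paper's route has the gap you name --- passing the ``asymptotically optimal polynomial'' formula through the degenerate limit $c\to 0$ needs either continuity of the min--max value or a direct argument; your conformal-map step fills exactly that gap. The connection to gradient descent via $\eta\to 1/d$, $\beta\to 0$ is the same in both.
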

\begin{restatable}{cor}{cormodulus}
\label{cor:max-modulus}
    The modulus of $\mathcal{P}_t(\lambda; K)$ is constant on the boundary of \eqref{eq:ellipse} for large $t$.
\end{restatable}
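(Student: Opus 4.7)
The plan is to parametrize the boundary of the ellipse in \eqref{eq:ellipse} and pull it back through the conformal map $\cosh$ discussed in Section~\ref{subsec:chebyshev}, so that the Chebyshev polynomial $T_t$ acts simply via the identity $T_t(\cosh\sigma)=\cosh(t\sigma)$. Concretely, write the boundary as $\lambda(\theta)=d+a\cos\theta+ib\sin\theta$ for $\theta\in[0,2\pi)$, and set
\begin{equation*}
w(\theta) \;\triangleq\; \frac{d-\lambda(\theta)}{c} \;=\; \frac{-a\cos\theta - ib\sin\theta}{c}.
\end{equation*}

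The key observation is that $w(\theta)$ traces an ellipse with foci at $\pm 1$, so it lies on the $\cosh$-image of a vertical line $\{x_0+iy:y\in\real\}$ for some $x_0>0$. When $c^2>0$, the curve has semi-axes $a/|c|$ and $b/|c|$ satisfying $(a/|c|)^2-(b/|c|)^2=1$, so $\cosh(x_0)=a/|c|$ and $|\sinh(x_0)|=b/|c|$; when $c^2<0$, writing $c=i|c|$ swaps the roles of the real and imaginary axes and yields the analogous identity with $\cosh(x_0)=b/|c|$. In both regimes $x_0>0$ strictly because $d/c$ lies outside the curve. Applying the Chebyshev identity,
\begin{equation*}
|T_t(w(\theta))|^2 \;=\; \cosh^2(tx_0)\cos^2(ty(\theta)) + \sinh^2(tx_0)\sin^2(ty(\theta)) \;=\; \cosh^2(tx_0) - \sin^2(ty(\theta)),
\end{equation*}
where $y(\theta)$ is the (affine) preimage parameter on the vertical line.

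Since $d/c$ is real and lies strictly outside the ellipse, $T_t(d/c)$ is a positive real number independent of $\theta$. Dividing,
\begin{equation*}
|\mathcal{P}_t(\lambda(\theta);K)| \;=\; \frac{\sqrt{\cosh^2(tx_0)-\sin^2(ty(\theta))}}{|T_t(d/c)|} \;=\; \frac{\cosh(tx_0)}{|T_t(d/c)|}\bigl(1+\bigo(e^{-2tx_0})\bigr),
\end{equation*}
where the error term is uniform in $\theta$. Since $x_0>0$, the modulus therefore becomes constant on the boundary as $t\to\infty$, up to an exponentially small error, which proves the corollary.

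The main obstacle is step two: correctly identifying the preimage vertical line when $c^2<0$, where $c$ is purely imaginary and the real/imaginary axes of the $w$-ellipse swap relative to the $\lambda$-ellipse. Once both regimes are unified under a single $x_0>0$, the rest amounts to bookkeeping with the hyperbolic identities already laid out in Section~\ref{subsec:chebyshev}.
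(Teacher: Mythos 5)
Your proof is correct and takes essentially the same route as the paper: both map the boundary ellipse through $\lambda\mapsto(d-\lambda)/c$ to a curve confocal with $\pm 1$, identify that curve as the $\cosh$-image of a vertical line $\realp\sigma=x_0$, and conclude constancy of the modulus from the fact that $T_t(\cosh\sigma)=\cosh(t\sigma)$ depends, to leading order, only on $x_0$. Your version is a bit more explicit, working with finite-$t$ polynomials and exhibiting the $\bigo(e^{-2tx_0})$ error, whereas the paper passes directly to the asymptotic convergence factor $r(\lambda)=\left|e^{\cosh^{-1}((d-\lambda)/c)-\cosh^{-1}(d/c)}\right|$; the only slip is the claim that $d/c$ is real when $c^2<0$ (it is purely imaginary there), but all you actually use is that $|T_t(d/c)|$ is independent of $\theta$, which holds regardless.
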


\section{Suboptimality of Negative Momentum}\label{sec:main-result}
In the previous section, we have shown that Polyak momentum with properly chosen parameters is asymptotically optimal for linear systems with the spectrum enclosed in the region of complex ellipse. In this section, we shift our attention back to minimax games. In particular, we analyze minimax optimization with the framework of variational inequality.

\begin{figure}[t]
	\centering
    \includegraphics[width=0.98\columnwidth]{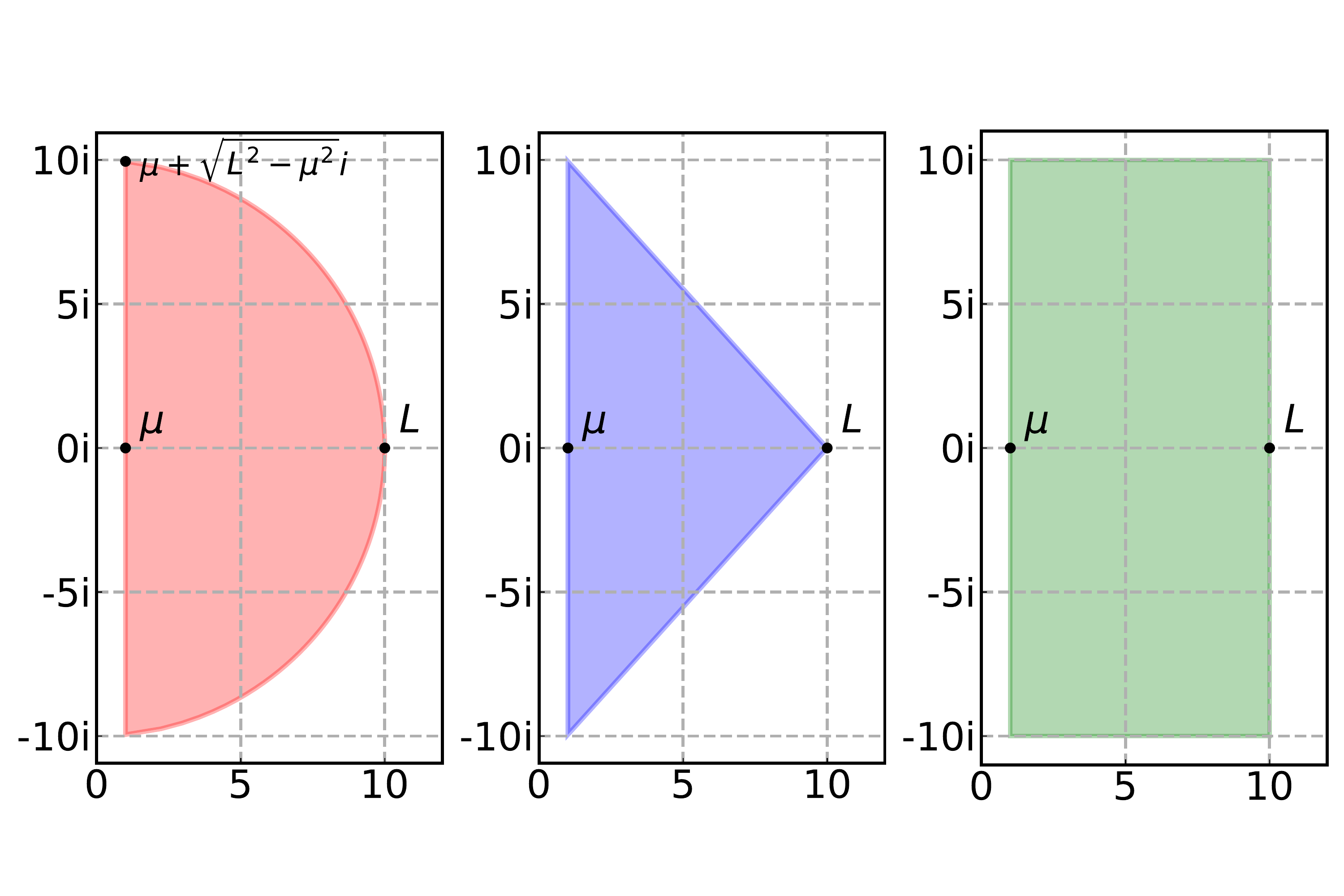}
    \vspace{-0.6cm}
    \caption{\textbf{Left:} The red region corresponds to $\hat{K}$, the set of strongly monotone problems. \textbf{Middle:} The blue triangle corresponds to $\hat{K}_1$, which is enclosed within $\hat{K}$. \textbf{Right:} The green rectangle is $\hat{K}_2$, which includes $\hat{K}$ in its interior. For visualization, we set $\mu = 1$ and $L = 10$.}
	\label{fig:region}
\end{figure}
Obviously, under Assumptions~\ref{ass:monotonicity} and~\ref{ass:lipschitz}, the eigenvalue of $\jacobian_F(\bz^*)$ will not tightly fall within a complex ellipse. It can be shown that it instead lies within the following set~\citep{azizian2020accelerating}:
\begin{equation}\label{eq: minimax-region}
    \hat{K} = \left\{ \lambda \in \comp: |\lambda| < L, \realp \lambda > \mu > 0 \right\}.
\end{equation}
This set is the intersection between a circle and a halfplane (see Figure~\ref{fig:region}). 

Recall that our goal is to search for the best achievable convergence rate of negative momentum (or generally Polyak momentum) for linear systems with spectrum enclosed within $\hat{K}$.
By linearizing the vector field locally $F(\bz) = \bA\bz + \bb = \bA(\bz-\bz^*)$ and expanding the state space to $[\bz_{t+1}^\top, \bz_t^\top]^\top$, we can write \eqref{eq:neg-momentum-def} in matrix form%
\begin{equation}\label{eq:aug-nm}
    \begin{bmatrix}
        \bz_{t + 1}-\bz^* \\ \bz_t-\bz^*
    \end{bmatrix} = \bar{\jacobian} \begin{bmatrix}
        \bz_{t}-\bz^* \\ \bz_{t-1}-\bz^*
    \end{bmatrix},
\end{equation}
where the matrix $\bar{\jacobian}$ has the following form:
\begin{equation}
    \bar{\jacobian} = \begin{bmatrix}  
    (1 + \beta) \iden - \eta \bA & - \beta \iden \\
    \iden & \zero
    \end{bmatrix}.
\end{equation}
Thus, finding the asymptotic convergence rate boils down to the following min-max problem
\begin{equation}\label{eq:minimax-nm}
    \hat{\rho}(\hat{K}) \triangleq \min_{\eta, \beta} \max_{\lambda \in \hat{K}} \rho \left( \begin{bmatrix}  
    1 + \beta - \eta \lambda & - \beta \\
    1 & 0
    \end{bmatrix} \right).
\end{equation}
Essentially, we would like to find the optimal step size $\eta$ and momentum parameter $\beta$ that minimize the spectral radius which determines the asymptotic convergence rate. However, due to the fact that the spectrum is in the complex plane and involves complex eigenvalues, bounding the spectral radius directly becomes challenging. Nevertheless, by Theorem~\ref{thm: negative-momentum} and Corollary~\ref{cor:nm}, we have the following equivalence:
\begin{restatable}[Asymptotic Equivalence between Polyak momentum and Chebyshev Iteration]{lemma}{lemreduction}
    For any $K \in \comp$ that is symmetric w.r.t the real axis and does not contain the origin,
    if Polyak momentum with parameters $\eta, \beta$ converges with rate $\rho < 1$, then there exists a rescaled and translated Chebyshev polynomial parameterized by $d, c^2 \in \real$ converging with the same asymptotic rate, and vice versa.
\end{restatable}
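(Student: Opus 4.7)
The plan is to establish the asymptotic equivalence through an explicit bijection between Polyak's parameters $(\eta,\beta)$ and the Chebyshev parameters $(d,c^2)$, given by $d=(1+\beta)/\eta$ and $c^2=4\beta/\eta^2$; its inverse is exactly the map stated in Corollary~\ref{cor:nm}. Note that $d$ and $c^2$ are always real and the sign of $c^2$ tracks the sign of $\beta$, so that negative momentum corresponds to imaginary $c$; this is no obstacle because all rate formulas depend on $c$ only through $c^2$.

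I would start by writing Polyak momentum as a three-term residual-polynomial recurrence
\[
p_{t+1}(\lambda) = (1+\beta-\eta\lambda)\,p_t(\lambda) - \beta\,p_{t-1}(\lambda), \quad p_0=1,\; p_1=1-\eta\lambda,
\]
and the rescaled Chebyshev iteration as $\mathcal{P}_t(\lambda)=T_t(u)/T_t(d/c)$ with $u=(d-\lambda)/c$. Then I would introduce the substitution $p_t(\lambda) = \gamma^t\, q_t(u)$ with $\gamma^2=\beta$. A short calculation shows this collapses the Polyak recurrence to the standard Chebyshev recurrence $q_{t+1}=2u\,q_t-q_{t-1}$ precisely when $c=2\gamma/\eta$ and $d=(1+\beta)/\eta$, i.e.\ at the bijection above.

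To match asymptotic rates I would diagonalize the Chebyshev recurrence through its characteristic roots $x_{\pm}(u)=u\pm\sqrt{u^2-1}$. Any solution of that recurrence satisfies $|q_t(u)|^{1/t}\to|x_+(u)|$ at generic $\lambda$, where $x_+$ denotes the root of larger modulus. This yields $\rho_P(K)=|\gamma|\,\max_{\lambda\in K}|x_+(u(\lambda))|$ for Polyak, and $\rho_{Ch}(K)=\max_{\lambda\in K}|x_+(u(\lambda))|/|x_+(d/c)|$ for the rescaled Chebyshev polynomial. Using the identity $d^2-c^2=(1-\beta)^2/\eta^2$ (under the bijection), a direct computation gives $x_+(d/c)=1/\gamma$, hence $|\gamma|=1/|x_+(d/c)|$ and the two rates coincide. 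The reverse direction (Chebyshev $\to$ Polyak) is then immediate by applying the inverse map.

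The main obstacle I anticipate is the initial-condition mismatch: the rescaled Polyak solution $q_t$ and the pure $T_t(u)$ satisfy the same recurrence but with different starting data, so they agree only up to the subdominant mode $x_-^t$. Since the asymptotic growth is dominated by $x_+^t$ except where the coefficient of $x_+^t$ happens to vanish, and since that coefficient is a rational function of $\lambda$ with only finitely many zeros, the $\limsup$ over $K$ is unaffected. A secondary care point is the consistent branch selection so that $|x_+|\ge|x_-|$ and the sign in $\sqrt{d^2-c^2}$ is fixed to match the formulas of Corollary~\ref{cor:nm}; the hypothesis that $K$ is symmetric about the real axis and avoids the origin ensures that these choices are globally consistent and that both $\gamma$ and $x_+(d/c)$ are well-defined whenever $\rho<1$.
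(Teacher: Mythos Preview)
Your proposal is correct and uses the same parameter bijection $d=(1+\beta)/\eta$, $c^2=4\beta/\eta^2$ as the paper, but the argument you give is genuinely different in style from the paper's own proof. The paper does not manipulate the three-term recurrence directly; instead it invokes the characterization (Lemma~\ref{lem:rho-regime}, due to Niethammer--Varga) of the $\rho$-convergence region $S(\eta,\beta,\rho)$ of Polyak momentum as an explicit ellipse, rewrites that ellipse in $(a,b,d)$-form, and checks that its focal parameter satisfies $c^2=4\beta/\eta^2$, so that Theorem~\ref{thm: negative-momentum} yields the same asymptotic rate for the Chebyshev iteration on that very ellipse. The reverse direction is handled by the inverse map of Corollary~\ref{cor:nm}. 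Your route---the substitution $p_t=\gamma^t q_t$ with $\gamma^2=\beta$, followed by the characteristic-root identity $x_+(d/c)=1/\gamma$---is more self-contained (it does not need the $\rho$-convergence ellipse lemma) and makes the mechanism behind the equivalence transparent, at the price of the extra care you flag about initial conditions and branch choices; the paper's route is shorter because it outsources the spectral-radius computation to an existing result. Both lead to the same conclusion, and your identity $d^2-c^2=(1-\beta)^2/\eta^2$ is exactly what makes the two descriptions agree.
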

Hence the min-max problem \eqref{eq:minimax-nm} is equivalent to:
\begin{equation}\label{eq:min-max-prob}
\begin{aligned}
    &\hat{\rho}(\hat{K}) = \min_{d, c^2 \in \real} \max_{\lambda \in \hat{K}} r(\lambda; d, c^2), \\ &\text{where} \; r(\lambda; d, c^2) \triangleq \lim_{t\rightarrow \infty} \left|\frac{T_t(\frac{d - \lambda}{c})}{T_t(\frac{d}{c})} \right|^{1/t}.
\end{aligned}
\end{equation}
We term $r(\lambda; d, c^2)$ the \textbf{convergence factor} of Chebyshev polynomial $T_t(\frac{d - \lambda}{c})/T_t(\frac{d}{c})$ at the point $\lambda$.
The reason why we can do such a reduction is that momentum method is equivalent to the rescaled and translated Chebyshev polynomial \eqref{eq: optimal-poly} asymptotically, and different parameters $\eta, \beta$ exactly corresponds to different choices of $d, c^2$ in \eqref{eq: optimal-poly}. 

However, the equivalent min-max problem \eqref{eq:min-max-prob} is not easy to solve directly and some reductions have to be done. Our very first step is to use the the sandwich technique, which is inspired by~\citet{azizian2020accelerating}. Let $\hat{K}_1$ and $\hat{K}_2$ be the two regions tightly lower bounding and upper bounding $\hat{K}$ (see Figure~\ref{fig:region}).
\begin{equation}
\begin{aligned}
    \hat{K}_1  = \{&\lambda \in \comp: \realp \lambda \geq \mu, \tfrac{1}{L}\realp \lambda + \tfrac{L -\mu}{L\sqrt{L^2 - \mu}}\imag \lambda \leq 1, \\ & \tfrac{1}{L}\realp \lambda - \tfrac{L -\mu}{L\sqrt{L^2 - \mu}}\imag \lambda \leq 1 \}; \\
    \hat{K}_2 = \{&\lambda \in \comp: \mu \leq \realp \lambda \leq L, \\ &-\sqrt{L^2 - \mu^2} \leq \imag \lambda \leq \sqrt{L^2 - \mu^2} \}.
\end{aligned}
\end{equation}
One can see that both $\hat{K}_1$ and $\hat{K}_2$ are convex polygons and particularly $\hat{K}_1 \subset \hat{K} \subset \hat{K}_2$. Therefore, we have
\begin{equation}
    \hat{\rho}(\hat{K}_1)\leq \hat{\rho}(\hat{K}) \leq \hat{\rho}(\hat{K}_2).
\end{equation}
Now, the main challenge is to compute $\hat{\rho}(\hat{K}_1)$ and $\hat{\rho}(\hat{K}_2)$. Ideally, we would hope that they are close to each other and thus we can bound $\hat{\rho}(\hat{K})$ tightly. 
Given that $\hat{K}_1$ and $\hat{K}_2$ are convex polygons, we have the following results:
\begin{lemma}[{\citet[Lemma~3.2]{manteuffel1977tchebychev}}]\label{lem:lem3.2}
Defining $H_1$ and $H_2$ to be the sets of vertices of $\hat{K}_1$ and $\hat{K}_2$, we have
\begin{equation}\label{eq:convex-poly-reduction}
\begin{aligned}
    \hat{\rho}(\hat{K}_1) = \min_{d, c^2} \max_{\lambda \in H_1} r(\lambda; d, c^2), \\
    \hat{\rho}(\hat{K}_2) = \min_{d, c^2} \max_{\lambda \in H_2} r(\lambda; d, c^2).
\end{aligned}
\end{equation}
Both $H_1$ and $H_2$ are symmetric w.r.t the real axis, we can therefore reduce them to $H_1 = \{L, \mu + \sqrt{L^2 - \mu^2}i \}$ and $H_2 = \{L + \sqrt{L^2 - \mu^2}i, \mu + \sqrt{L^2 - \mu^2}i  \}$.
\end{lemma}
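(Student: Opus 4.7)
The plan is to reduce the max over a convex polygon to the max over its vertices by exploiting the fact that the sub-level sets of $r(\,\cdot\,;d,c^2)$ are convex, and then use complex-conjugate symmetry of $\hat{K}_1,\hat{K}_2$ together with the realness of $d, c^2$ to cut the vertex set in half.

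First I would derive an explicit asymptotic formula for $r(\lambda;d,c^2)$. Writing $(d-\lambda)/c = \cosh(\sigma(\lambda))$ with $\sigma(\lambda) = \cosh^{-1}\bigl((d-\lambda)/c\bigr)$, the identity $T_t(\cosh\sigma) = \cosh(t\sigma)$ from Section~\ref{subsec:chebyshev} gives
\begin{equation*}
    r(\lambda; d, c^2) = \lim_{t\to\infty}\left|\frac{\cosh(t\sigma(\lambda))}{\cosh(t\sigma(0))}\right|^{1/t} = \exp\bigl(\realp\sigma(\lambda) - \realp\sigma(0)\bigr),
\end{equation*}
where the branch of $\cosh^{-1}$ is chosen so that $\realp\sigma \geq 0$. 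Since $\cosh$ maps the vertical line $\{\realp\sigma = \text{const}\}$ to an ellipse with foci at $\pm 1$ (see the end of Section~\ref{subsec:chebyshev}), the pre-image $\{\lambda : \realp\sigma(\lambda) = \text{const}\}$ is, after the affine change of variables $\lambda \mapsto (d-\lambda)/c$, an ellipse with foci at $d\pm c$ in the $\lambda$-plane. Consequently, for every $R>0$ the sub-level set $\{\lambda\in\comp : r(\lambda;d,c^2)\leq R\}$ is a closed elliptical disc, and in particular \emph{convex}. Note that this works uniformly whether $c^2>0$ or $c^2<0$ (the ellipse is then rotated by $90^\circ$).

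Next I would use this convexity to prove \eqref{eq:convex-poly-reduction}. Let $P\in\{\hat{K}_1,\hat{K}_2\}$ be a convex polygon with vertex set $H$. Fix $(d,c^2)$ and set $R := \max_{\lambda\in H} r(\lambda;d,c^2)$. Each vertex lies in the convex sub-level set $\{r \leq R\}$, so by convexity the entire polygon $P = \mathrm{conv}(H)$ is contained in $\{r \leq R\}$, giving $\max_{\lambda\in P} r(\lambda;d,c^2) = R = \max_{\lambda\in H} r(\lambda;d,c^2)$. Taking the minimum over $(d,c^2)$ on both sides yields \eqref{eq:convex-poly-reduction}.

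Finally I would shrink the vertex sets using symmetry. Because $d$ and $c^2$ are real, the polynomial $T_t((d-\lambda)/c)/T_t(d/c)$ has real coefficients (when $c^2<0$ an overall factor of $c^t$ cancels in the ratio), so $r(\bar\lambda;d,c^2) = r(\lambda;d,c^2)$. The polygon $\hat{K}_1$ has three vertices $\{L,\;\mu\pm\sqrt{L^2-\mu^2}\,i\}$ and $\hat{K}_2$ has four vertices $\{L\pm\sqrt{L^2-\mu^2}\,i,\;\mu\pm\sqrt{L^2-\mu^2}\,i\}$; conjugate pairs contribute equal values of $r$, so each can be collapsed to a single representative, yielding the stated $H_1=\{L,\;\mu+\sqrt{L^2-\mu^2}\,i\}$ and $H_2=\{L+\sqrt{L^2-\mu^2}\,i,\;\mu+\sqrt{L^2-\mu^2}\,i\}$.

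The main obstacle I anticipate is the careful justification that the sub-level sets of $r$ really are elliptical discs for \emph{all} real choices of $(d,c^2)$, including boundary cases such as $c^2 = 0$ (where the limit degenerates and the level sets become discs, recovering Corollary~\ref{cor:gda}) and the case where the segment between the foci $d\pm c$ passes through the polygon; handling the multi-valuedness of $\cosh^{-1}$ and choosing the correct branch so that the exponential-growth rate formula above is valid everywhere off the focal segment is the technical point that needs care.
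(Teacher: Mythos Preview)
The paper does not supply its own proof of this lemma; it is quoted directly from \citet{manteuffel1977tchebychev} and only paraphrased in the sentence following the statement (``the largest convergence factor occurs on the \emph{hull}''). Your proposal is correct and is precisely the argument Manteuffel uses: the identity $r(\lambda;d,c^2)=\exp\bigl(\realp\cosh^{-1}((d-\lambda)/c)-\realp\cosh^{-1}(d/c)\bigr)$ makes the level sets of $r(\,\cdot\,;d,c^2)$ a nested family of confocal ellipses (this is exactly the content of the paper's Lemma~\ref{lem:convergence-factor} and the paragraph after it), hence $r$ is quasiconvex and its maximum over a convex polygon is attained at a vertex; the conjugation symmetry then halves the vertex set. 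So your route coincides with both Manteuffel's original proof and the intuition the paper records around Lemma~\ref{lem:convergence-factor}.

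Your worry about the focal segment intersecting the polygon and about the branch of $\cosh^{-1}$ is legitimate bookkeeping but not a genuine obstacle: on the focal segment one has $\realp\sigma=0$, which is the minimum of $\realp\sigma$, so those points never realize the inner maximum; and for $c^2<0$ the substitution $c\mapsto ic'$ with $c'>0$ turns $T_t((d-\lambda)/c)$ into a real-coefficient polynomial in $\lambda$ (the factor $c^{-t}$ cancels in the ratio, as you note), so the confocal-ellipse picture and the symmetry $r(\bar\lambda)=r(\lambda)$ go through verbatim with the axes swapped.
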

Essentially, Lemma~\ref{lem:lem3.2} says that, for optimal $d$ and $c^2$, the largest convergence factor occurs on the \emph{hull} of $\hat{K}_1$ and $\hat{K}_2$, i.e., the set including all the vertices. Therefore, we do not need to maximize over all elements of $\hat{K}_1$ and $\hat{K}_2$, which makes the problem much simpler.
Next, we apply the powerful Alternative theorem in functional analysis~\citep{bartle1964elements} to further simplify the min-max problem.
\begin{restatable}{lemma}{lemalter}\label{lem:alternative-thm}
    For optimal parameters $d_i^*, {c_i^2}^*$ in min-max problem \eqref{eq:convex-poly-reduction}, all points in $H_i$ has the same rate
    \begin{equation*}
    \begin{aligned}
        r(L; d_1^*, {c_1^2}^*) &= r(\mu + \sqrt{L^2 - \mu^2}i; d_1^*, {c_1^2}^*); \\
        r(L + \sqrt{L^2 - \mu^2}i; d_2^*, {c_2^2}^*) &= r(\mu + \sqrt{L^2 - \mu^2}i; d_2^*, {c_2^2}^*).\\
    \end{aligned}
    \end{equation*}
\end{restatable}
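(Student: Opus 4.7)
The plan is to argue by contradiction, exploiting first-order optimality of the inner maximum over the finite set $H_i$. Suppose, toward contradiction, that the two (non-conjugate) points $\lambda_a, \lambda_b \in H_i$ yield distinct convergence factors at the optimum; without loss of generality,
\[
r(\lambda_a; d_i^*, {c_i^2}^*) > r(\lambda_b; d_i^*, {c_i^2}^*) =: \rho^\dagger, \qquad \rho^* := r(\lambda_a; d_i^*, {c_i^2}^*).
\]
Because $d_i^*$ and ${c_i^2}^*$ are real, the conjugate $\bar\lambda_a$ gives the same rate, so $\lambda_a$ (and its conjugate) is the unique active point and $\hat\rho(\hat K_i) = \rho^*$. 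Since $r$ is continuous in $(d,c^2)$ where the asymptotic is well-defined, there is a neighborhood $N$ of $(d_i^*, {c_i^2}^*)$ on which $r(\lambda_b; d, c^2) < r(\lambda_a; d, c^2)$, so that $\max_{\lambda \in H_i} r(\lambda; d, c^2) = r(\lambda_a; d, c^2)$ on all of $N$.

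The next step is a first-order optimality argument, which is exactly where the alternative theorem enters: in the finite-dimensional min-max over a finite active set, either a common descent direction exists, or the zero vector lies in the convex hull of the gradients of the active constraints. With only one active point, this collapses to the statement that $\nabla_{(d, c^2)} r(\lambda_a; d, c^2)$ must vanish at $(d_i^*, {c_i^2}^*)$. Equivalently, if the gradient were nonzero, moving $(d, c^2)$ infinitesimally along its negative would strictly decrease $r(\lambda_a; \cdot)$; by the strict inequality for $\lambda_b$ and continuity, $r(\lambda_b; \cdot)$ would remain below this new value, so the max over $H_i$ would strictly decrease, contradicting optimality of $(d_i^*, {c_i^2}^*)$.

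The remaining task is therefore to show that $\nabla_{(d, c^2)} r(\lambda_a; d, c^2) \neq 0$ at the optimum. I would use the asymptotic
\[
r(\lambda; d, c^2) = \frac{|\psi((d - \lambda)/c)|}{\psi(d/c)}, \qquad \psi(\xi) = \xi + \sqrt{\xi^2 - 1},
\]
valid for arguments outside $[-1,1]$ on the principal branch; this follows from $T_t(\xi)^{1/t} \to |\psi(\xi)|$ as $t \to \infty$. Taking $\log r$ and differentiating with respect to $d$ and with respect to $c^2$ yields explicit rational expressions in $\psi((d-\lambda)/c)$ and $\psi(d/c)$. Geometrically, varying $d$ translates the confocal family of ellipses (with foci $d \pm c$) that foliate the level sets of $r(\cdot;d,c^2)$, while varying $c^2$ deforms its eccentricity; for any $\lambda_a \neq 0$ not sitting on one of the foci and any parameters with $\rho^* < 1$, these two independent perturbations cannot simultaneously leave $r(\lambda_a; \cdot)$ stationary, so at least one partial derivative is nonzero.

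I expect the last step — the concrete nonvanishing-gradient check — to be the principal obstacle, since it requires care with the branch of $\psi$ (in particular, $c^2$ can be negative, so $c$ may be purely imaginary), and the points $\lambda_a \in \{L,\; \mu + \sqrt{L^2 - \mu^2}\,i,\; L + \sqrt{L^2 - \mu^2}\,i\}$ must each be verified to be away from the focal configurations that could degenerate the gradient. Once the nonvanishing is established for these specific $\lambda_a$, the contradiction is immediate and the lemma follows.
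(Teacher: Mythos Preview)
Your approach is essentially the paper's: both invoke the Alternative theorem from min--max analysis (you rederive its content via the contradiction/descent-direction argument) to show the optimum cannot lie where only one of $r(\lambda_a;\cdot), r(\lambda_b;\cdot)$ is active. Where you plan a direct nonvanishing-gradient check at the unknown optimum, the paper instead asserts that each surface $r(\lambda;\cdot)$ has a \emph{unique} local minimum in $(d,c^2)$ and exhibits two specific parameter points, $(d,c^2)=(L,0)$ and $(d,c^2)=(\mu,\mu^2-L^2)$, at which the ordering of the two surfaces is opposite; since at each surface's sole critical point the \emph{other} surface is the larger one, case~1 of the Alternative theorem is excluded and the optimum must lie on the intersection $r(\lambda_a;\cdot)=r(\lambda_b;\cdot)$.
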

Intuitively, Lemma~\ref{lem:alternative-thm} suggests that vertices of $\hat{K}_i$ have the same convergence factor. 
As a consequence, one can show that all vertices of $\hat{K}_i$ are on the boundary of the same complex ellipse.
\begin{lemma}[{\citet{manteuffel1977tchebychev}}]\label{lem:convergence-factor}
    Let $\mathcal{E}(d, c^2)$ be the family of complex ellipse in the complex plane centered at $d$ with foci at $d - c$ and $d + c$. Further let $E(d, c^2) \in \mathcal{E}(d, c^2)$ be a member of this family that not include the origin in its interior. Then for any two points $\lambda_i \in E_i(d, c^2)$ and $\lambda_j \in E_j(d, c^2)$, we have
    \begin{equation*}
    \begin{aligned}
        r(\lambda_i; d, c^2) = r(\lambda_j; d, c^2)  &\iff E_i(d, c^2) = E_j(d, c^2) \\
        r(\lambda_i; d, c^2) < r(\lambda_j; d, c^2)  &\iff E_i(d, c^2) \subset E_j(d, c^2) \\
    \end{aligned}
    \end{equation*}
\end{lemma}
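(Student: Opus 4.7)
The plan is to reduce everything to the classical fact that the asymptotic growth rate $|T_t(w)|^{1/t}$ is constant on each member of the confocal family of ellipses with foci $\pm 1$ and strictly monotone in the size of that member. First I would apply the affine change of variables $w = (d - \lambda)/c$, which bijectively maps $\mathcal{E}(d, c^2)$ onto the confocal family with foci $\pm 1$: the foci $d \pm c$ are sent to $\mp 1$. The reduction is uniform in the sign of $c^2$, because when $c$ is pure imaginary the map is merely an additional rotation-and-scale that still sends the confocal family onto itself (only $c^2$ enters the classification, as it must since $T_t((d-\lambda)/c)/T_t(d/c)$ depends on $c^2$ alone, by parity of the Chebyshev coefficients).

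Next, using the Joukowski parameterization $w = (u + u^{-1})/2$ and the identity $T_t(w) = (u^t + u^{-t})/2$, and choosing the branch with $|u| \geq 1$ (unique whenever $w \notin [-1,1]$), I would conclude
\begin{equation*}
|T_t(w)|^{1/t} \longrightarrow |u| \quad \text{as } t \to \infty.
\end{equation*}
The level sets $\{w : |u(w)| = \mathrm{const}\}$ are exactly the ellipses confocal with $\pm 1$, so the quantity $\phi(\lambda) \triangleq |u((d-\lambda)/c)|$ is constant on each $E(d, c^2)$ and strictly increasing as one passes to a strictly larger member of $\mathcal{E}(d, c^2)$. Applying the limit both in the numerator of $r$ (with $w = (d-\lambda)/c$) and in its denominator (with $w = d/c$) then yields
\begin{equation*}
r(\lambda; d, c^2) = \frac{\phi(\lambda)}{\phi(0)}.
\end{equation*}

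Both claims are now immediate. The equality $r(\lambda_i) = r(\lambda_j)$ is equivalent to $\phi(\lambda_i) = \phi(\lambda_j)$, which holds iff $\lambda_i$ and $\lambda_j$ lie on the same ellipse of $\mathcal{E}(d, c^2)$; and the strict inequality $r(\lambda_i) < r(\lambda_j)$ is equivalent to $\phi(\lambda_i) < \phi(\lambda_j)$, which by the monotonicity of $\phi$ along the nested family is the same as $E_i \subset E_j$. The main point requiring care is the asymptotic $|T_t(w)|^{1/t} \to |u|$ near the degenerate case $|u| = 1$, i.e., $w \in [-1,1]$; the hypothesis that the origin lies outside every $E(d, c^2)$ in the family keeps $d/c$ strictly away from this case, so $\phi(0) > 1$, and any $\lambda$ falling on the degenerate segment can be handled by a continuity argument in which the limiting rate is $1$.
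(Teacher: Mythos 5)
Your proof is correct and takes essentially the same approach as the paper: both pass to the confocal family with foci $\pm 1$ via the affine map $\lambda \mapsto (d-\lambda)/c$ and then use that the Joukowski/$\cosh^{-1}$ parameter $|u| = e^{\mathrm{Re}\,\cosh^{-1}(w)}$ is constant on each confocal ellipse and strictly monotone in its semi-major axis. The paper presents this only as a brief informal remark following the lemma (deferring to Manteuffel for the full proof and keying off the $\cosh$ discussion in Section~\ref{subsec:chebyshev}), whereas you make the same mechanism explicit through the identity $T_t(w) = (u^t + u^{-t})/2$ and the resulting formula $r(\lambda; d, c^2) = \phi(\lambda)/\phi(0)$.
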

To understand this Lemma, one shall realize that the convergence factor can be further written as
\begin{equation*}
    r(\lambda; d, c^2) = e^{\cosh^{-1}(\tfrac{d - \lambda}{c}) - \cosh^{-1}(\tfrac{d}{c})} \propto e^{\cosh^{-1}(\tfrac{d - \lambda}{c})}
\end{equation*}
In particular, the transformation $\lambda \mapsto \tfrac{d - \lambda}{c}$ maps the points in $E(d, c^2)$ to $E(0, 1)$. By the property of $\cosh^{-1}$ (see Section~\ref{subsec:chebyshev}), $\cosh^{-1}(\tfrac{d - \lambda}{c})$ maps $E(d, c^2)$ to a vertical line $x = \cosh^{-1}(a)$ where $a$ is the semi-major axis of the specific $E(0, 1)$. So to compare the convergence factors of two points $\lambda_i$, $\lambda_j$, we only need to compare the semi-major axis of $E_i(d, c^2)$ and $E_j(d, c^2)$.

Finally, we are ready to present our main result.
\begin{theorem}[Suboptimality of Negative Momentum]\label{thm:main-result}
    Under Assumptions~\ref{ass:monotonicity} and~\ref{ass:lipschitz}, we have the optimal momentum parameter $\beta$ to be negative and
    \begin{equation*}
        \hat{\rho}(\hat{K}_1) = 1 - \Theta(\kappa^{-1.5}), \quad
        \hat{\rho}(\hat{K}_2) = 1 - \Theta(\kappa^{-1.5}).
    \end{equation*}
    By the sandwich argument, we therefore get $\hat{\rho}(\hat{K}) = 1 - \Theta(\kappa^{-1.5})$. Assuming the vector field $F$ is continuously differentiable, for $\bz_0$ close to $\bz^*$, negative momentum can converge to $\bz^*$ asymptotically with the rate $1 - \Theta(\kappa^{-1.5})$.
\end{theorem}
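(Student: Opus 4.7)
The plan is to combine Lemmas \ref{lem:lem3.2}, \ref{lem:alternative-thm}, and \ref{lem:convergence-factor} to reduce each of $\hat{\rho}(\hat{K}_{1})$ and $\hat{\rho}(\hat{K}_{2})$ to a one-parameter optimisation over ellipses passing through all vertices of the respective polygon, then run a large-$\kappa$ asymptotic expansion of the convergence factor. Negativity of the optimal $\beta$ will follow from Remark \ref{rem:optimal-mom} once I verify that the optimal $c^{2}$ is negative, and the nonlinear local-convergence claim will follow from Definition \ref{def:stable-fixed-points} plus continuous differentiability of $F$.

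For $\hat{K}_{1}$, Lemma \ref{lem:alternative-thm} forces the three vertices $L,\ \mu \pm \sqrt{L^{2}-\mu^{2}}\,i$ to share convergence factor at the optimum, so by Lemma \ref{lem:convergence-factor} they must lie on a common ellipse in some family $\mathcal{E}(d,c^{2})$. Imposing passage through these vertices yields $a = L-d$ and $b^{2} = (L+\mu)(L-d)^{2}/(L+\mu-2d)$; positivity of $b^{2}$ together with the origin-exclusion $d > a$ restricts the search to $d \in (L/2,\,(L+\mu)/2)$, and a direct computation gives $c^{2} = -2d(L-d)^{2}/(L+\mu-2d) < 0$ throughout. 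Rationalising the rate formula of Theorem \ref{thm: negative-momentum} into $r = (a+b)/(d+\sqrt{d^{2}-c^{2}})$, setting $d = L/2 + s\mu/2$ with $s \in (0,1)$, and expanding both sides with the common scale $B = L\sqrt{\kappa/(4(1-s))}$ shows that the $\kappa^{0}$, $\kappa^{-1/2}$ and $\kappa^{-1}$ contributions agree, while
\begin{equation*}
\frac{a+b}{B} - \frac{d+\sqrt{d^{2}-c^{2}}}{B} = -\frac{2s\sqrt{1-s}}{\kappa^{3/2}} + O(\kappa^{-2}).
\end{equation*}
Dividing gives $r = 1 - 2s\sqrt{1-s}\,\kappa^{-3/2} + o(\kappa^{-3/2})$, and maximising $g(s) = 2s\sqrt{1-s}$ over $(0,1)$ yields $s^{*} = 2/3$ with $g(s^{*}) = 4\sqrt{3}/9$, so $\hat{\rho}(\hat{K}_{1}) = 1 - \Theta(\kappa^{-3/2})$.

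For $\hat{K}_{2}$, the non-conjugate vertices $L+\sqrt{L^{2}-\mu^{2}}\,i$ and $\mu+\sqrt{L^{2}-\mu^{2}}\,i$ share their imaginary part, which via the ellipse equations immediately forces the centre to be $d = (L+\mu)/2$. The free parameter becomes $a \in ((L-\mu)/2,\,(L+\mu)/2)$ with $b^{2} = (L^{2}-\mu^{2})\,a^{2}/(a^{2}-((L-\mu)/2)^{2})$, and a short algebraic check gives $c^{2} < 0$ throughout. Setting $a = (L-\mu)/2 + t\mu$ with $t \in (0,1)$ and running the analogous expansion produces $r = 1 - 2\sqrt{t}\,(1-t)\,\kappa^{-3/2} + o(\kappa^{-3/2})$, maximised at $t^{*} = 1/3$ with the identical leading constant $4\sqrt{3}/9$, so $\hat{\rho}(\hat{K}_{2}) = 1 - \Theta(\kappa^{-3/2})$. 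The sandwich $\hat{\rho}(\hat{K}_{1}) \le \hat{\rho}(\hat{K}) \le \hat{\rho}(\hat{K}_{2})$ then gives $\hat{\rho}(\hat{K}) = 1 - \Theta(\kappa^{-3/2})$, and negativity of the optimal $\beta$ follows from $c^{2} < 0$ in both cases via Remark \ref{rem:optimal-mom}.

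For the nonlinear local statement, continuous differentiability of $F$ lets us linearise the augmented iteration \eqref{eq:aug-nm} at $\bz^{*}$ to a smooth map whose Jacobian has spectral radius at most $\hat{\rho}(\hat{K}) < 1$; Definition \ref{def:stable-fixed-points} together with the standard fact that strictly stable fixed points are locally attracting at the rate of the spectral radius then furnishes the claimed $1 - \Theta(\kappa^{-3/2})$ asymptotic local rate. The main obstacle is the asymptotic expansion itself: because numerator and denominator of $r$ agree through order $\kappa^{-1}$, the useful information lives entirely in the $\kappa^{-3/2}$ correction whose sign flips between the two sides, so any truncation of $a$, $b$, $d$, or $\sqrt{d^{2}-c^{2}}$ coarser than $O(\kappa^{-3/2})$ would wipe the answer out entirely.
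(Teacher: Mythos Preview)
Your reduction is identical to the paper's: invoke Lemmas~\ref{lem:lem3.2}, \ref{lem:alternative-thm} and \ref{lem:convergence-factor} to force the vertices of $\hat K_i$ onto a common ellipse $E_{a,b,d}$, derive exactly the same relations $a=L-d$, $b^2=(L+\mu)(L-d)^2/(L+\mu-2d)$ for $\hat K_1$ and $d=(L+\mu)/2$, $b^2=(L^2-\mu^2)a^2/(a^2-((L-\mu)/2)^2)$ for $\hat K_2$, and then optimise the rate formula of Theorem~\ref{thm: negative-momentum} over the one remaining free parameter. The paper proceeds from there by brute-force inequalities: for each of \eqref{eq:prob-tri-main} and \eqref{eq:prob-rect-main} it proves a parameter-uniform lower bound (e.g.\ $\ge 1-2\kappa^{-1.5}-24\kappa^{-2}$) and exhibits one specific parameter value (e.g.\ $x=1/4$) giving an upper bound $\le 1-c\,\kappa^{-1.5}$. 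You instead rationalise to $r=(a+b)/(d+\sqrt{d^2-c^2})$ and run a large-$\kappa$ expansion, which is cleaner and, as a bonus, pins down the sharp leading constant $\tfrac{4\sqrt{3}}{9}$ that the paper's bounds do not reveal.

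Two points to tighten. First, your pointwise expansion $r=1-g(s)\kappa^{-3/2}+o(\kappa^{-3/2})$ immediately gives the upper bound on $\hat\rho(\hat K_i)$ by fixing $s=2/3$ (resp.\ $t=1/3$), but the \emph{lower} bound $\hat\rho(\hat K_i)\ge 1-C\kappa^{-3/2}$ requires the $o(\kappa^{-3/2})$ remainder to be uniform in the free parameter; near the degenerate endpoint $s\to 1^-$ (where $b^2\to\infty$) your scale $B$ blows up and the expansion as written is not obviously uniform. The paper's explicit inequalities handle this automatically because they hold for every $x$ in the closed range, so you should either verify uniformity on $[\varepsilon,1-\varepsilon]$ and dispose of the boundary layer separately, or supplement the expansion with a crude uniform bound. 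Second, your claim that $c^2<0$ ``throughout'' for $\hat K_2$ is slightly too strong: one computes $a^2-b^2<0$ only when $\kappa^2>\kappa+1$, which the paper states explicitly. This is harmless for the asymptotic $\Theta(\kappa^{-3/2})$ conclusion but should be acknowledged for the negativity-of-$\beta$ claim.
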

\begin{proof}[Proof sketch]
Here we give a short proof sketch with detailed proof deferred to the supplement.
Let's first prove the result for $\hat{\rho}(\hat{K}_1)$. By Lemma~\ref{lem:alternative-thm}, we have
\begin{equation*}
    r(L; d_1^*, {c_1^2}^*) = r(\mu+\sqrt{L^2 - \mu^2}i; d_1^*, {c_1^2}^*),
\end{equation*}
which implies that both $L$ and $\mu + \sqrt{L^2 - \mu^2}i$ are on the boundary of the same complex ellipse with the center $d_1^*$ and foci at $d_1^* - c_1^*$ and $d_1^* + c_1^*$ according to Lemma~\ref{lem:convergence-factor}. %
Then by Theorem~\ref{thm: negative-momentum} and Corollary~\ref{cor:max-modulus}, we can reduce the computation of $\hat{\rho}(\hat{K}_1)$ to the following constrained problem:
\begin{equation}\label{eq:constrainted-prob-main}
\begin{aligned}
    &\hat{\rho}(\hat{K}_1) := \min_{a, b, d} \tfrac{d - \sqrt{d^2 + b^2 - a^2}}{a - b}, \\ &\text{s.t. } E_{a,b,d}(L) = E_{a, b, d}(\mu + \sqrt{L^2 - \mu^2}i) = 1
\end{aligned}
\end{equation}
which involves three free variables and two constraints. The two constraint equations imply
\begin{equation*}
    b^2 = \frac{(L + \mu)(L - d)^2}{L + \mu - 2d} > (L - d)^2 = a^2.
\end{equation*}
Therefore, the optimal momentum $\beta$ for $\hat{K}_1$ is negative. For $\hat{K}_2$, we follow the same procedure and have
\begin{equation*}
    b^2 = \frac{(L^2 - \mu^2) a^2}{a^2 - (\frac{L - \mu}{2})^2}, \; a \in \left[\frac{L - \mu}{2}, \frac{L + \mu}{2}\right].
\end{equation*}
In the case of $L^2 > \mu^2 + \mu L$, we have $b^2 > a^2$ and therefore the optimal momentum is also negative since $c^2 = a^2 - b^2 < 0$ (see Remark~\ref{rem:optimal-mom}). Hence, we conclude that the optimal momentum for $\hat{K}$ is negative.

Next, we bound $\hat{\rho}(\hat{K}_1)$ and $\hat{\rho}(\hat{K}_2)$ so as to estimate $\hat{\rho}(\hat{K})$. Towards this end, one can further simplify the problem \eqref{eq:constrainted-prob-main} to a single variable minimization task:
\begin{equation}\label{eq:prob-tri-main}
    \min_{d \in [\frac{L}{2}, \frac{\mu+L}{2}]} \frac{d - \sqrt{\frac{2d(L - d)^2}{L + \mu - 2d} + d^2}}{(L - d)(1 - \sqrt{\frac{L + \mu}{L + \mu - 2d})}}.
\end{equation}
We can repeat the same process for $\hat{\rho}(\hat{K}_2)$, getting the following problem:
\begin{equation}\label{eq:prob-rect-main}
    \min_{a \in [\frac{L - \mu}{2}, \frac{L + \mu}{2}]} \frac{\frac{L + \mu }{2} - \sqrt{(\frac{L + \mu }{2})^2 + \frac{a^2 L^2}{a^2 - (\frac{L - \mu }{2})^2} - a^2}}{a - \frac{a L}{\sqrt{a^2 - (\frac{L - \mu }{2})^2}}}.
\end{equation}
Particularly, one can show that both \eqref{eq:prob-tri-main} and \eqref{eq:prob-rect-main} are approximately $1 - \Theta(\kappa^{-1.5})$ (see the supplement for details). Hence, we have $\hat{\rho}(\hat{K}) = 1 - \Theta(\kappa^{-1.5})$ by the sandwich argument. Together with the assumption that the vector field $F$ is continuously differentiable, we proved that negative momentum converges locally with this rate. This completes the proof.
\end{proof}
This shows that the optimal momentum parameter for minimax games is indeed negative and negative momentum with optimally tuned parameter does speed up the convergence of GDA locally, whose iteration complexity is $\bigo(\kappa^2)$~\citep{azizian2020tight}. However, the best existing lower bound on $\hat{K}$ is $\Omega(\kappa)$ iteration complexity~\citep{azizian2020accelerating, zhang2019lower}. Furthermore, the lower bound is tight as it is already achieved by EG and OGDA~\citep{mokhtari2020unified}. Thus we conclude that negative momentum is indeed a suboptimal algorithm.

\section{Related Works}

Polynomial-based iterative methods have long been used in solving linear systems. Two classical examples are the conjugate gradient method~\citep{hestenes1952methods} and the Chebyshev iteration~\citep{lanczos1952solution, golub1961chebyshev}, which forms the basis of some of the most used optimization methods such as Polyak momentum. For symmetric linear systems, \citet{fischer2011polynomial} provides a comprehensive study over the state of art on polynomial-based iterative methods. For non-symmetric linear systems, \citet{manteuffel1977tchebychev} discussed Chebyshev polynomial and showed that the iteration converges whenever the eigenvalues of the linear system lie in the open right half complex plane. Particularly, it was shown by~\citep{manteuffel1977tchebychev} that Chebyshev polynomial is optimal when the eigenvalues of the linear system lie within a complex ellipse, which inspires our work. For general non-symmetric linear systems, \citet{eiermann1983construction} used complex analysis tools to define, for a given compact set, its asymptotic convergence factor: it is the optimal asymptotic convergence rate a first-order method can achieve for all linear systems with spectrum in the set. Recently, \citet{azizian2020accelerating} used the tool of polynomial approximation to characterize acceleration in smooth games. \citet{pedregosa2020average} and \citet{scieur2020universal} used these ideas to develop methods that are optimal for the average-case.

\begin{figure*}[t]
	\centering
    \begin{subfigure}[t]{0.48\textwidth}
        \centering
        \includegraphics[width=0.90\textwidth]{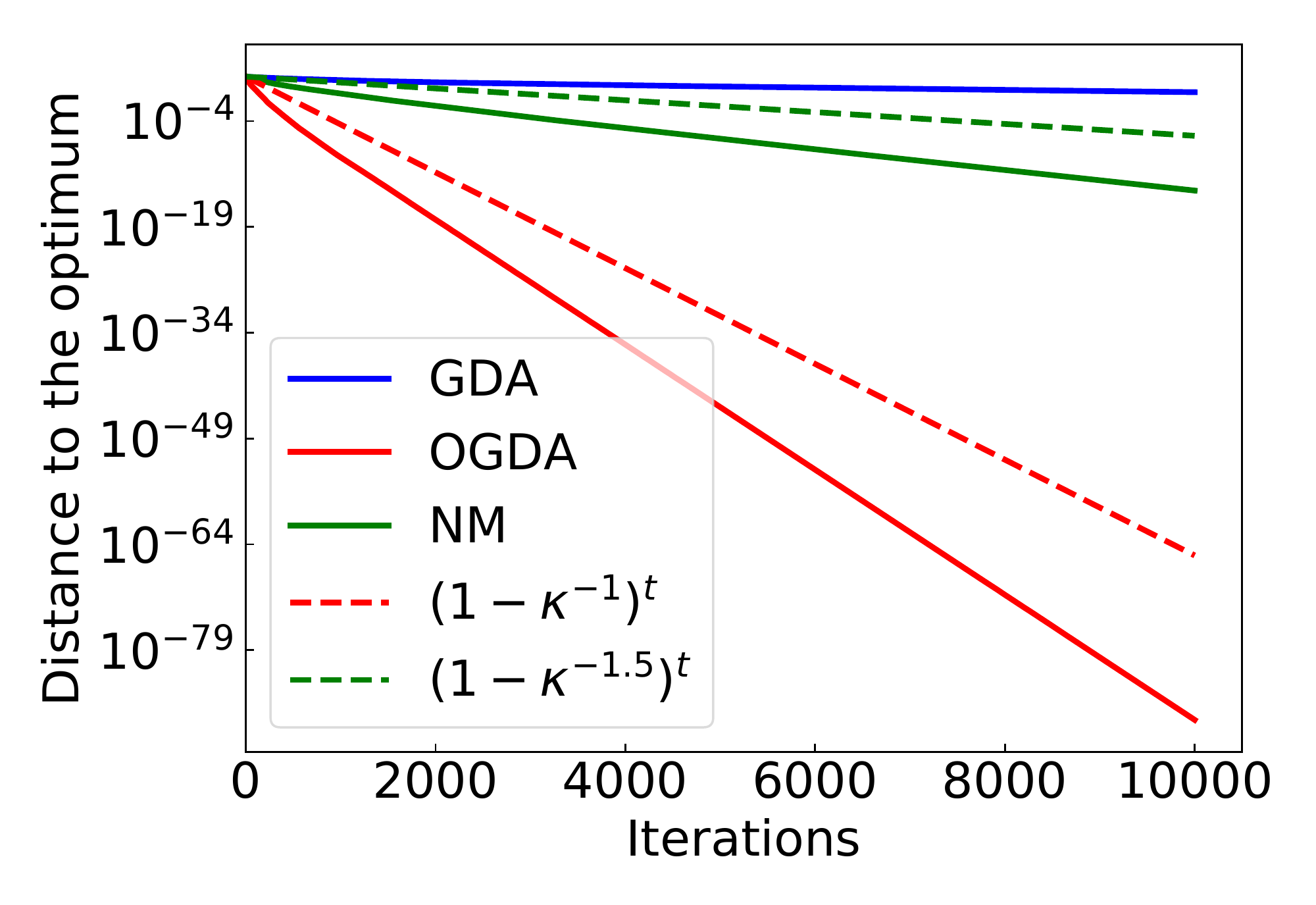}
        \vspace{-0.2cm}
    \end{subfigure}
    \begin{subfigure}[t]{0.48\textwidth}
        \centering
        \includegraphics[width=0.90\textwidth]{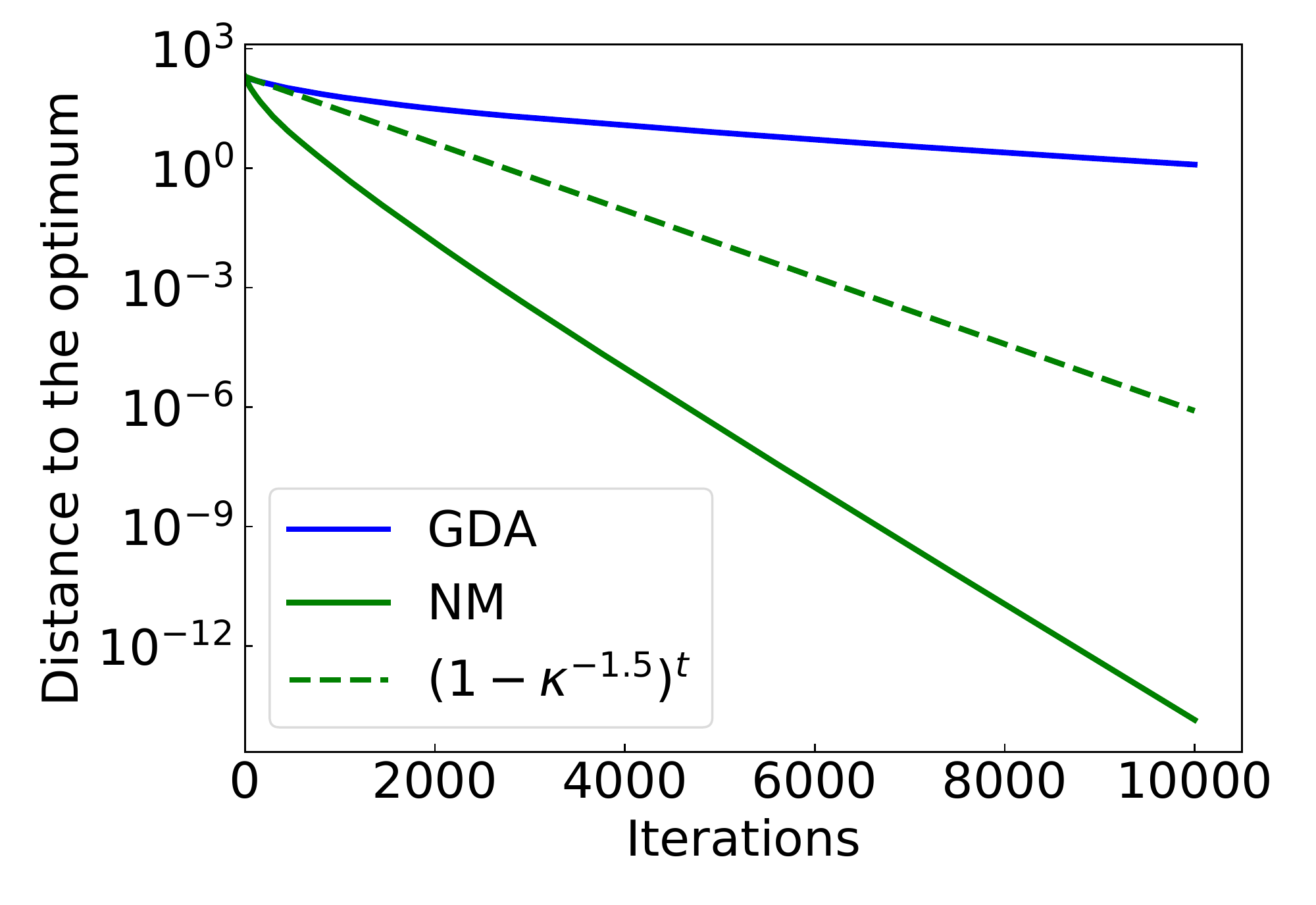}
        \vspace{-0.2cm}
    \end{subfigure}
    \vspace{-0.25cm}
	\caption{Distance to the optimum as a function of training iterations. Negative momentum accelerates GDA significantly on this quadratic minimax game. In particular, its convergence rate is slightly better than the worst-case rate of $1 - \kappa^{-1.5}$. However, negative momentum is outperformed by OGDA, whose convergence rate is approximately $1 - \kappa^{-1}$.}
	\label{fig:simulation}
\end{figure*}
In the context of minimax optimization, a line of recent work has studied various algorithms under different assumptions. For the strongly-convex strongly-concave case, \citet{tseng1995linear} and \citet{nesterov2006solving} proved that their algorithms find an $\epsilon$-saddle point with a gradient complexity of $\bigo(\kappa\ln(1/\epsilon))$ using a variational inequality approach. Using a different approach, \citet{gidel2018variational} and \citet{mokhtari2020unified} derived the same convergence results for OGDA. Particularly, \citet{mokhtari2020unified} unified the algorithm of OGDA and EG from the perspective of proximal point method, which gives sharp analysis. Notably, this convergence rate is known to be optimal to some extent~\citep{azizian2020accelerating}. Very recently, \citet{ibrahim2019linear, zhang2019lower} established fine-grained lower complexity bound among all the first-order algorithms in this setting, which was later achieved by the algorithms in~\citet{lin2020near, wang2020improved}. 
To our knowledge, the convergence rate of negative momentum has not been established in this setting before. The only known rate of negative momentum was for simple bilinear games~\citep{gidel2019negative}. Particularly, they showed that negative momentum with alternating updates achieves linear convergence, matching the rate of EG and OGDA. In this sense, we are the first to give an explicit rate of negative momentum for strongly-convex strongly-concave setting, though the rate is just local convergence rate.

More broadly, nonconvex-nonconcave problem has gained more attention due to its generality. However, there might be no \emph{Nash} (or even local Nash) equilibrium in that setting due to the loss of strong duality. To overcome that, different notations of equilibrium were introduced by taking into account the sequential structure of games~\citep{jin2019local, fiez2019convergence, zhang2020optimality, farnia2020gans}. In that setting, the main challenge is to find the right equilibrium and some algorithms~\citep{wang2019solving, adolphs2019local, mazumdar2019finding} have been proposed to achieve that.

\section{Numerical Simulations}\label{sec:simulation}
In this section, we compare the performance of negative momentum with Gradient-Descent-Ascent (GDA) and Optimistic Gradient-Descent-Ascent (OGDA) so as to verify our theoretical result on the convergence rate of negative momentum. In particular, we focus on the following quadratic minimax problem:
\begin{equation}
    \min_{\bx \in\real^d} \max_{\by \in \real^d} f(\bx, \by) = \frac{1}{2}\bx^\top \bA \bx + \bx^\top \bB \by - \frac{1}{2} \by^\top \bC \by
\end{equation}
where we set the dimension $d = 100$. 
The matrix $\bA$ and $\bC$ have eigenvalues $\left\{\frac{1}{ i}\right\}_{i=1}^{d}$, giving a condition number of $100$. For matrix $\bB$, we set it to be a random diagonal matrix with entries sampling from $[0, 1]$. For all algorithms, the iterates start with $\bx_0 = \mathbf{1}$ and $\by_0 = \mathbf{1}$. Figure~\ref{fig:simulation} shows that the distance to the optimum of negative momentum, GDA and OGDA versus the number of iterations for this quadratic minimax problem. For all methods, we tune their hyperparameters by grid-search. We can observe that all three methods converge linearly to the optimum. As expected, negative momentum performs better than GDA, but worse than OGDA. Moreover, both negative momentum and OGDA yield convergences rates that are slightly better than their worst-case rates.

\section{Discussion}
Although it may seem tempting to directly apply algorithmic techniques for minimization to minimax optimization, they can be provably suboptimal, as shown in this paper. The reason is that the dynamics of minimax optimization is different and considerably more complex. Thus we believe it is important to delve deeper and understand such game dynamics with multiple interacting objectives better.
Despite an existing line of work on accelerating GDA in smooth games, previously negative momentum was only analyzed for bilinear games. Due to the fact that negative momentum enjoys the same convergence rate as OGDA does in bilinear games, researchers are often confused with the difference between them and even call OGDA as ``negative momentum" (see~\citet{mokhtari2020unified} for example). Therefore, we believe our analysis of negative momentum is crucial as it highlights that negative momentum is fundamentally different from OGDA.

It is important to emphasize that we only provide local convergence rate of negative momentum in the paper. It is currently unknown whether negative momentum can attain the same geometric rate globally\footnote{It is now proved by~\citet{zhang2020unified} that negative momentum attains the same rate globally.}. We left this analysis for future work. In addition, it would be interesting to derive the optimal polynomial (hence optimal first-order algorithm) for smooth and strongly-monotone games. One promising way to achieve that is to finding the conformal mapping between the complement of $\hat{K}$ and the complement of unit disk, then Fabor polynomial~\citep{curtiss1971faber} can be adopted to derive the optimal polynomial.

\newpage
\section*{Acknowledgements}
We thank Shengyang Sun, Jenny Bao, Ricky Chen and Roger Grosse for detailed comments on early drafts. 
We thank all the anonymous reviewers (especially reviewer 2) for their careful reading of our manuscript and their many insightful comments and suggestions.
GZ would also like to thank for the support from Borealis AI fellowship. 
\bibliographystyle{plainnat}
\bibliography{reference.bib}

\newpage
\appendix
\onecolumn

\section{Proofs for Section~\ref{sec: poly-based-methods}}
\subsection{Proof of Theorem~\ref{thm: negative-momentum}}
Let $q_t(\lambda)$ be the rescaled and translated Chebyshev polynomial with degree $t$:
\begin{equation*}
    q_t(\lambda) \triangleq \frac{T_t(\frac{d - \lambda}{c})}{T_t(\frac{d}{c})}
\end{equation*}
We need to prove that for the region $K$ defined in \eqref{eq:ellipse}, $\mathcal{P}_t(\lambda; K) = q_t(\lambda)$. According to the definition of $\mathcal{P}_t(\lambda; K)$, we have
\begin{equation*}
    \mathcal{P}_t(\lambda; K) = \argmin_{p_t \in \Pi_t, \\ p_t(0)=1} \max_{\lambda \in K} |p_t(\lambda)|
\end{equation*}
Because $K$ is a bounded region, we know that the maximum modulus of an analytical function occurs on the boundary (according to maximum modulus principle). Let $B$ be the boundary of $K$ with the form
\begin{equation}\label{eq:boundary}
    B = \left\{\lambda \in \comp: \frac{(\realp\lambda - d)^2}{a^2} + \frac{(\imag\lambda)^2}{b^2} = 1 \right\}, \; d > a > 0, b > 0
\end{equation}
Instead of maximizing the modulus over the entire region $K$, we can take the maximum over the boundary $B$ and find the optimal polynomial $\mathcal{P}_t(\lambda;K)$ with
\begin{equation*}
    \mathcal{P}_t(\lambda; K) = \argmin_{p_t \in \Pi_t, \\ p_t(0)=1} \max_{\lambda \in B} |p_t(\lambda)|
\end{equation*}
With this reduction, we have the following:
\begin{lemma}
    Suppose $B$ does not include the origin in its interior, then we have
    \begin{equation*}
        \min_{\lambda \in B} |q_t(\lambda)| \leq \max_{\lambda \in B} |\mathcal{P}_t(\lambda; K)| \leq \max_{\lambda \in B} |q_t(\lambda)|
    \end{equation*}
\end{lemma}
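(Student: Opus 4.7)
My plan is to treat the two inequalities separately: the upper bound is essentially a free consequence of the optimality of $\mathcal{P}_t$, while the lower bound requires a Rouché/zero-counting argument that exploits the structure of the Chebyshev zeros.

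For the upper bound, I first observe that $q_t$ itself is an admissible candidate in the variational problem defining $\mathcal{P}_t$: clearly $q_t \in \Pi_t$, and at the origin $q_t(0) = T_t(d/c)/T_t(d/c) = 1$. Since $\mathcal{P}_t$ is by definition the minimizer of $\max_{\lambda \in K}|p_t(\lambda)|$ over all $p_t \in \Pi_t$ with $p_t(0)=1$, and since the maximum modulus principle lets me replace $K$ by its boundary $B$, I immediately conclude $\max_B |\mathcal{P}_t| \leq \max_B |q_t|$.

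For the lower bound I argue by contradiction: assume $\max_B|\mathcal{P}_t(\lambda)| < \min_B|q_t(\lambda)|$. Then $|\mathcal{P}_t(\lambda)| < |q_t(\lambda)|$ everywhere on $B$, and Rouché's theorem applied to the bounded region enclosed by $B$ (namely $K$) with $f = q_t$ and $f - g = \mathcal{P}_t$ implies that $q_t$ and $q_t - \mathcal{P}_t$ have the same number of zeros inside $K$, counted with multiplicity. I then count zeros: $q_t$ is the rescaled Chebyshev polynomial, whose zeros sit at $\lambda_k = d - c\, s_k$ where $s_k \in (-1,1)$ are the zeros of $T_t$. These $t$ points lie on the open segment joining the foci $d-c$ and $d+c$, which is a chord of the ellipse and hence lies strictly inside $K$ (this holds whether $c$ is real, giving a horizontal segment, or purely imaginary, giving a vertical segment through $d$). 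Thus $q_t - \mathcal{P}_t$ has at least $t$ zeros in $K$. But $(q_t - \mathcal{P}_t)(0) = 0$ and $0 \notin K$ by assumption, giving an additional zero outside $K$. Since $q_t - \mathcal{P}_t \in \Pi_t$ has degree at most $t$ but at least $t+1$ zeros, it must be identically zero. Then $\min_B|q_t| = \min_B|\mathcal{P}_t| \leq \max_B|\mathcal{P}_t|$, contradicting the standing assumption.

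The main subtle step is the zero-count: I need to be confident that every Chebyshev zero of $q_t$ lands strictly in the open interior of $K$ (not on $B$), so that Rouché's count inside $K$ really produces $t$ zeros for both polynomials. This reduces to observing that the segment between the foci of a non-degenerate ellipse is always a proper chord, and that the Chebyshev nodes are strict interior points of $(-1,1)$, so their images under $s \mapsto d - cs$ avoid the endpoints $d \pm c$ and therefore avoid $B$. A secondary point worth flagging is that the argument uses only $p_t(0) = 1$ and the location of the origin relative to $K$; the same template (Rouché plus zero count at $0$) would recover the classical Chebyshev optimality statements in the real interval case, which makes me confident this is the intended route.
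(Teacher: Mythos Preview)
Your proposal is correct and follows essentially the same route as the paper: the upper bound by optimality of $\mathcal{P}_t$, and the lower bound by contradiction via Rouch\'e's theorem, using that $q_t$ has all $t$ zeros inside $K$ while the extra zero of $q_t - \mathcal{P}_t$ at the origin lies outside. Your write-up is in fact more careful than the paper's, explicitly locating the Chebyshev zeros on the focal segment (in both the real and purely imaginary $c$ cases) and spelling out the identically-zero conclusion that the paper leaves implicit.
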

\begin{proof}
    First, the second inequality holds by the definition of $\mathcal{P}_t(\lambda; K)$. We prove the first inequality by contradiction. Suppose that $\min_{\lambda \in B} |q_t(\lambda)| > \max_{\lambda \in B} |\mathcal{P}_t(\lambda; K)|$, then $|q_t(\lambda)| > |\mathcal{P}_t(\lambda; K)|$ for all $\lambda \in B$. By Rouch{\'e}'s Theorem~\citep{beardon2019complex}, we have the polynomial $q_t(\lambda) - \mathcal{P}_t(\lambda; K)$ has the same number of zeros in the interior of $B$ as $q_t(\lambda)$ does. Notice that $q_t(\lambda)$ has $t$ zeros inside $B$ and $q_t(0) - \mathcal{P}_t(0; K) = 0$. Because the origin $\lambda = 0$ is not in the interior of $B$, we thus conclude that $q_t(\lambda) - \mathcal{P}_t(\lambda; K)$ is a polynomial of degree $t$ with $t+1$ zeros, which is impossible. We therefore proved the first inequality.
\end{proof}
Given the sandwiching inequalities above, it suffices to show that
\begin{equation*}
    \lim_{t \rightarrow \infty} \left(\min_{\lambda \in B} |q_t(\lambda)| \right)^{1/t} = \lim_{t \rightarrow \infty} \left(\max_{\lambda \in B} |q_t(\lambda)| \right)^{1/t}.
\end{equation*}
According to the definition of Chebyshev polynomial $T_t$, we have
\begin{equation*}
    r(\lambda) \triangleq \lim_{t\rightarrow \infty }|q_t(\lambda)|^{1/t} = \left|e^{\cosh^{-1}\left(\frac{d-\lambda}{c}\right) - \cosh^{-1}\left(\frac{d}{c}\right)}\right|
\end{equation*}
One can easily show that $r(\lambda)$ is constant over the boundary $B$. Therefore, we prove that $\mathcal{P}_t(\lambda; K) = q_t(\lambda)$ as $t \rightarrow \infty$. We are now only left with the asymptotic convergence factor. We first consider the case of $a \neq b$, we know $\rho(K) = r(d + a)$ with the form
\begin{equation*}
    r(d + a) = \left|e^{\cosh^{-1}\left(\frac{a}{c}\right) - \cosh^{-1}\left(\frac{d}{c}\right)}\right| = \frac{d - \sqrt{d^2 + b^2 - a^2}}{a - b} 
\end{equation*}
For the special case of $a = b$, the ellipse is deformed into the circle, thus we have
\begin{equation*}
    \lim_{c \rightarrow 0 }r(d + a) = \lim_{c \rightarrow 0 }\left|e^{\cosh^{-1}\left(\frac{a}{c}\right) - \cosh^{-1}\left(\frac{d}{c}\right)}\right| = \lim_{c \rightarrow 0} \frac{a + \sqrt{a^2 - c^2}}{d + \sqrt{d^2 - c^2}} = a/d
\end{equation*}

\subsection{Proofs for Other Results}
\cornm*
\begin{proof}
As we showed in Theorem~\ref{thm: negative-momentum}, $\mathcal{P}_t(\lambda; K)$ is a rescaled and translated Chebyshev polynomial. Together with Lemma~\ref{lem:first-order}, we have
\begin{equation*}
    \bz_{t+1} - \bz^* = \frac{T_{t+1}(\frac{d - \bA}{c})}{T_{t+1}(\frac{d}{c})} (\bz_0 - \bz^*)
\end{equation*}
Using the recursion of Chebyshev polynomials (\ref{eq:cheby-recursion}), we have
\begin{equation*}
\begin{aligned}
    \bz_{t+1} - \bz^* &= 2\frac{T_t(\frac{d}{c})}{T_{t+1}(\frac{d}{c})} \frac{d - \bA}{c} (\bz_t - \bz^*) - \frac{T_{t-1}(\frac{d}{c})}{T_{t+1}(\frac{d}{c})} (\bz_{t-1} - \bz^*) \\
    &= \frac{2d}{c} \frac{T_t(\frac{d}{c})}{T_{t+1}(\frac{d}{c})} (\bz_t - \bz^*) - \frac{T_{t-1}(\frac{d}{c})}{T_{t+1}(\frac{d}{c})} (\bz_{t-1} - \bz^*) - \frac{2}{c} \frac{T_t(\frac{d}{c})}{T_{t+1}(\frac{d}{c})} F(\bz_t) \\
    &= \bz_{t} - \bz^* + \frac{T_{t-1}(\frac{d}{c})}{T_{t+1}(\frac{d}{c})} (\bz_t - \bz_{t-1}) - \frac{2}{c} \frac{T_t(\frac{d}{c})}{T_{t+1}(\frac{d}{c})} F(\bz_t) \\
\end{aligned}
\end{equation*}
Thus, we have
\begin{equation*}
    \bz_{t+1} = \bz_t - \eta_t F(\bz_t) + \beta_t (\bz_t - \bz_{t-1})
\end{equation*}
where $\eta_t = \frac{2}{c} \frac{T_t(\frac{d}{c})}{T_{t+1}(\frac{d}{c})}$ and $\beta_t = \frac{T_{t-1}(\frac{d}{c})}{T_{t+1}(\frac{d}{c})}$.
Again appealing to recursion (\ref{eq:cheby-recursion}), we can generate $\eta_t$ and $\beta_t$ recursively:
\begin{equation*}
    \eta_t = \left[d - (c/2)^2 \eta_{t-1} \right]^{-1}, \quad \beta_t = d\eta_t - 1
\end{equation*}
With such recursion, one can easily get the fix points of $\eta$ and $\beta$:
\begin{equation}\label{eq:asymptotic-params}
    \eta = 2\frac{d - \sqrt{d^2 - c^2}}{c^2}, \quad \beta = d \eta - 1
\end{equation}
We now proceed to prove that Polyak momentum with fixed $\eta, \beta$ can achieve the convergence rate. We first introduce the concept of $\rho$-convergence region for momentum method:
\begin{equation*}
    S(\eta, \beta, \rho) = \left\{\lambda \in \comp: \forall x \in \comp, x^2 - (1 - \eta \lambda + \beta)z + \beta \leq \rho \Rightarrow |x| \leq \rho \right\}
\end{equation*}
We call it the $\rho$-convergence region of the momentum method as it corresponds to the maximal regions of the complex plane where the momentum method converges at rate $\rho$.
It has been shown by~\citet{niethammer1983analysis} that $S(\eta, \beta, \rho)$ is a complex ellipse on complex plane.
\begin{lemma}[{\citet[Cor.~6]{niethammer1983analysis}}]\label{lem:rho-regime}
    For $\beta \leq \rho$ and $\rho > 0$, we have
    \begin{equation*}
        S(\eta, \beta, \rho) = \left\{\lambda \in \comp: \frac{(1 - \eta \realp \lambda + \beta)^2}{(1 + \tau)^2} + \frac{(\eta \imag \lambda)^2}{(1 - \tau)^2} \leq \rho^2   \right\}
    \end{equation*}
    where $\tau = \beta / \rho^2$.
\end{lemma}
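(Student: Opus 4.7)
The plan is to recognize $S(\eta,\beta,\rho)$ as a root-location region for the characteristic polynomial of the scalar momentum recurrence, rescale so that the target disk is the unit disk, and then extract the ellipse equation by parametrizing a critical root on the unit circle.

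First I would observe that, for a fixed eigenvalue $\lambda$, the momentum recursion on the corresponding mode has characteristic polynomial
\begin{equation*}
    p_\lambda(x) = x^2 - (1+\beta-\eta\lambda)\,x + \beta,
\end{equation*}
and $\lambda \in S(\eta,\beta,\rho)$ is exactly the condition that both roots of $p_\lambda$ lie in the closed disk $\{|x| \leq \rho\}$. Rescaling via $w = x/\rho$, this is equivalent to both roots of
\begin{equation*}
    q_\lambda(w) = w^2 - \tfrac{1+\beta-\eta\lambda}{\rho}\,w + \tau, \qquad \tau = \beta/\rho^2,
\end{equation*}
lying in the closed unit disk. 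Since $\eta,\beta,\rho,\tau$ are real, the whole problem is symmetric about the real axis in $\lambda$, matching the symmetry of the claimed ellipse.

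Next I would characterize $\partial S$. On the boundary at least one root, say $w_1$, satisfies $|w_1|=1$, so write $w_1 = e^{i\theta}$. Vieta's product rule $w_1 w_2 = \tau$ then forces $w_2 = \tau e^{-i\theta}$, which automatically has $|w_2| = |\tau|\leq 1$ in the regime where the claim is nondegenerate. Substituting these into the sum relation
\begin{equation*}
    w_1 + w_2 = (1+\tau)\cos\theta + i(1-\tau)\sin\theta = \tfrac{1+\beta-\eta\lambda}{\rho},
\end{equation*}
and separating real and imaginary parts gives
\begin{equation*}
    \cos\theta = \tfrac{1+\beta-\eta\,\realp\lambda}{\rho(1+\tau)}, \qquad \sin\theta = -\tfrac{\eta\,\imag\lambda}{\rho(1-\tau)}.
\end{equation*}
Eliminating $\theta$ via $\cos^2\theta + \sin^2\theta = 1$ and multiplying through by $\rho^2$ yields exactly the stated ellipse equation, so $\partial S$ is contained in the ellipse; as $\theta$ ranges over $[0,2\pi)$ the parametrization sweeps out the whole ellipse.

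Finally, I would promote this boundary matching to an equality of closed solid sets. The roots of $q_\lambda$ depend continuously on $\lambda$ (as an unordered pair), so $\operatorname{int} S$ is the set of $\lambda$ for which both roots lie in the \emph{open} unit disk, which is one of the two components of the complement of the ellipse curve. Evaluating at the convenient point $\eta\lambda = 1+\beta$ gives $q_\lambda(w) = w^2 + \tau$ with roots of modulus $\sqrt{|\tau|}<1$, placing this point in both $S$ and the open interior of the ellipse, so the two interiors coincide. The main obstacle I anticipate is a clean treatment of the degenerate cases $|\tau|=1$, where the ellipse collapses to a segment (if $\tau=1$) or a circle (if $\tau=-1$) and one factor $(1\pm\tau)$ vanishes; these should be handled by a limiting argument from the nondegenerate regime. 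A secondary bookkeeping issue is verifying that the hypothesis $\beta \leq \rho$ together with $\rho>0$ is enough to keep the Vieta-based parametrization valid (equivalently $|\tau|\le 1$), since otherwise $S$ is empty and both sides of the claimed identity reduce to the empty set.
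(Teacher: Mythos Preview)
The paper does not actually prove this lemma; it is quoted as a black box from \citet[Cor.~6]{niethammer1983analysis} inside the proof of Corollary~\ref{cor:nm}, and is then only \emph{used} (with the specific $\eta,\beta,\rho$ from \eqref{eq:asymptotic-params} plugged in) to check that $S(\eta,\beta,\rho)$ coincides with the ellipse $K$. So there is no in-paper argument to compare your proposal against.

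Your route is the standard one and is correct: rescale the characteristic polynomial to put the target disk at radius~$1$, parametrize the boundary by pinning one root $w_1=e^{i\theta}$ on the unit circle, read off $w_2=\tau e^{-i\theta}$ from Vieta's product relation, and eliminate $\theta$ from the sum relation to get the ellipse equation. The continuity-plus-test-point argument to upgrade the boundary identification to equality of the closed regions is also fine.

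One correction to your closing bookkeeping remark: in the regime $|\tau|>1$ the set $S$ is indeed empty (since $|w_1w_2|=|\tau|>1$ forces one root outside the unit disk), but the right-hand side is \emph{not} empty --- the denominators $(1\pm\tau)^2$ remain positive and the inequality still cuts out a genuine solid ellipse --- so the two sides do not both collapse to $\emptyset$. The stated hypothesis $\beta\le\rho$, $\rho>0$ does not by itself force $|\tau|\le1$ (take e.g.\ $\beta=0.5$, $\rho=0.6$); the condition that actually makes the identity hold is $|\beta|\le\rho^2$, and the paper's transcription of Niethammer's hypothesis appears to be imprecise on this point. Your argument is complete once that is assumed.
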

Taking the values of $\eta, \beta$ by (\ref{eq:asymptotic-params}) and $\rho = \frac{d - \sqrt{d^2 + b^2 - a^2}}{a - b}$, we have
$S(\eta, \beta, \rho)$ is the same as $K$ in~\eqref{eq:ellipse}.
Therefore, we can achieve the asymptotic convergence rate even with constant $\eta$ and $\beta$.
\end{proof}

\corgda*
\begin{proof}
As we shown in Theorem~\ref{thm: negative-momentum}, the asymptotic optimal polynomial is rescaled and translated polynomial
\begin{equation*}
    \mathcal{P}_t(\lambda; K) = \frac{T_t(\frac{d - \lambda}{c})}{T_t(\frac{d}{c})}
\end{equation*}
In the case of $\alpha = \beta = 0$, we have
\begin{equation*}
    \lim_{c \rightarrow 0} \mathcal{P}_t(\lambda; K) = \lim_{c \rightarrow 0} \frac{T_t(\frac{d - \lambda}{c})}{T_t(\frac{d}{c})} = \frac{(d - \lambda)^t + (d - \lambda)^{-t}}{d^t + d^{-t}}
\end{equation*}
For large $t$, we get $\mathcal{P}_t(\lambda; K) = (1 - \lambda / d)^t$.
\end{proof}

\cormodulus*
\begin{proof}
We can show that the modulus of $\mathcal{P}_t(\lambda; K)$ has the form:
\begin{equation*}
    |\mathcal{P}_t(\lambda; K)| = \left|e^{\cosh^{-1}\left(\frac{d-\lambda}{c}\right) - \cosh^{-1}\left(\frac{d}{c}\right)}\right| 
    =  e^{\mathrm{Re}\left(\cosh^{-1}\left(\frac{d-\lambda}{c}\right) - \cosh^{-1}\left(\frac{d}{c}\right)\right)}
\end{equation*}
One can show that $\frac{d - \lambda}{c}$ maps points of boundary (\ref{eq:boundary}) to the family of ellipse with the center $0$ and foci at $-1$ and $1$. According to the property of $\cosh^{-1}$, it maps such family of ellipse to the vertical line $x = \mathrm{const}$. We thus conclude that the modulus of $\mathcal{P}_t(\lambda; K)$ is constant on the boundary.
\end{proof}

\section{Proofs for Section~\ref{sec:main-result}} 
\subsection{Proof of Theorem~\ref{thm:main-result}}
Let's first prove the result for $\hat{\rho}(\hat{K}_1)$. By Lemma~\ref{lem:alternative-thm}, we have
\begin{equation*}
    r(L; d_1^*, {c_1^2}^*) = r(\mu+\sqrt{L^2 - \mu^2}i; d_1^*, {c_1^2}^*),
\end{equation*}
which implies that both $L$ and $\mu + \sqrt{L^2 - \mu^2}i$ are on the boundary of the same complex ellipse with the center $d_1^*$ and foci at $d_1^* - c_1^*$ and $d_1^* + c_1^*$ according to Lemma~\ref{lem:convergence-factor}. %
Then by Theorem~\ref{thm: negative-momentum} and Corollary~\ref{cor:max-modulus}, we can reduce the computation of $\hat{\rho}(\hat{K}_1)$ to the following constrained problem:
\begin{equation}\label{eq:constrainted-prob}
\begin{aligned}
    &\hat{\rho}(\hat{K}_1) := \min_{a, b, d} \tfrac{d - \sqrt{d^2 + b^2 - a^2}}{a - b}, \\ &\text{s.t. } E_{a,b,d}(L) = E_{a, b, d}(\mu + \sqrt{L^2 - \mu^2}i) = 1
\end{aligned}
\end{equation}
which involves three free variables and two constraints. By two constraints, we have
\begin{equation*}
    b^2 = \frac{(L + \mu)(L - d)^2}{L + \mu - 2d} > (L - d)^2 = a^2.
\end{equation*}
Therefore, the optimal momentum $\beta$ for $\hat{K}_1$ is negative. For $\hat{K}_2$, we follow the same procedure and have
\begin{equation*}
    b^2 = \frac{(L^2 - \mu^2) a^2}{a^2 - (\frac{L - \mu}{2})^2}, \; a \in [\frac{L - \mu}{2}, \frac{L + \mu}{2}].
\end{equation*}
In the case of $L^2 > \mu^2 + \mu L$, we have $b^2 > a^2$ and therefore the optimal momentum is also negative. Hence, we conclude that the optimal momentum for $\hat{K}$ is negative.

Next, one can further simplify the problem (\ref{eq:constrainted-prob}) to a single variable minimization task:
\begin{equation}\label{eq:prob-tri}
    \min_{d \in [\frac{L}{2}, \frac{\mu+L}{2}]} \frac{d - \sqrt{\frac{2d(L - d)^2}{L + \mu - 2d} + d^2}}{(L - d)(1 - \sqrt{\frac{L + \mu}{L + \mu - 2d})}}.
\end{equation}
We can repeat the same process for $\hat{\rho}(\hat{K}_2)$, getting the following problem:
\begin{equation}\label{eq:prob-rect}
    \min_{a \in [\frac{L - \mu}{2}, \frac{L + \mu}{2}]} \frac{\frac{L + \mu }{2} - \sqrt{(\frac{L + \mu }{2})^2 + \frac{a^2 L^2}{a^2 - (\frac{L - \mu }{2})^2} - a^2}}{a - \frac{a L}{\sqrt{a^2 - (\frac{L - \mu }{2})^2}}}.
\end{equation}
Let us first focus on (\ref{eq:prob-tri}). Let $\kappa:=L/\mu$, $x:=d/\mu-\frac{\kappa}{2}$, $D:=d/\mu$, then
\begin{equation*}
\begin{aligned}
(\ref{eq:prob-tri}) &= \min_{x\in[0,1/2]}\frac{-D\sqrt{\kappa+1-2D}+\sqrt{2D(\kappa-D)^2+D^2(\kappa+1-2D)}}{(\kappa-D)\left(\sqrt{\kappa+1}-\sqrt{\kappa+1-2D}\right)}\\
&= \min_{x\in[0,1/2]}\frac{\left(-D\sqrt{1-2x}+\sqrt{D^2+(2\kappa-3D)D\kappa}\right)\cdot\left(\sqrt{\kappa+1}+\sqrt{1-2x}\right)}{2D\cdot (\kappa-D)}.
\end{aligned}
\end{equation*}
Since $\forall x\ge 0$, $\sqrt{1+x}\ge 1+\frac{x}{2}-\frac{x^2}{8}$ and for any $0\le x\le \frac{1}{2}$, $|x-3x^2|\le x$,
\begin{equation*}
\begin{aligned}
\sqrt{D^2+(2\kappa-3D)D\kappa}&= \sqrt{\frac{1}{4}\kappa^3 +\frac{1}{4}\kappa^2-\kappa^2x-3\kappa x^2+\kappa x + x^2}\\
&=\frac{1}{2}\kappa^{1.5}\cdot\sqrt{1+\frac{1-4x}{\kappa}+\frac{4x-12x^2}{\kappa^2}+\frac{4x^2}{\kappa^3}}\\
&\ge \frac{1}{2}\kappa^{1.5}\cdot \left[1+\frac{1-4x}{2\kappa}+\frac{2x-6x^2}{\kappa^2}+\frac{2x}{\kappa^3}-\frac{1}{8}\left(\frac{1-4x}{\kappa}+\frac{4x-12x^2}{\kappa^2}+\frac{4x}{\kappa^3}\right)^2\right]\\
&\ge \frac{1}{2}\kappa^{1.5}\cdot\left(1+\frac{1-4x}{2\kappa}-\frac{8}{\kappa^2}\right).
\end{aligned}
\end{equation*}
Meanwhile
\begin{equation*}
\begin{aligned}
\sqrt{\kappa+1}+\sqrt{1-2x}\ge \sqrt{\kappa}+\frac{1}{2\sqrt{\kappa}}+\sqrt{1-2x}-\frac{1}{8\kappa^{1.5}}.
\end{aligned}
\end{equation*}
Thus
\begin{equation*}
\begin{aligned}
&\left(-D\sqrt{1-2x}+\sqrt{D^2+(2\kappa-3D)D\kappa}\right)\cdot\left(\sqrt{\kappa+1}+\sqrt{1-2x}\right)\\
\ge & \left(-\frac{\kappa}{2}\sqrt{1-2x}-x\sqrt{1-2x}+\frac{1}{2}\kappa^{1.5}\left(1+\frac{1-4x}{2\kappa}-\frac{8}{\kappa^2}\right)\right)\cdot \left(\sqrt{\kappa}+\frac{1}{2\sqrt{\kappa}}+\sqrt{1-2x}-\frac{1}{8\kappa^{1.5}}\right)\\
\ge & \frac{1}{2}\left(\kappa^2-2x\sqrt{1-2x}\sqrt{\kappa}-\frac{\sqrt{1-2x}}{2}\sqrt{\kappa}+\frac{1-4x}{2}\sqrt{1-2x}\sqrt{\kappa}-24\right)\\
\ge & \frac{1}{2}\left(\kappa^2-2\sqrt{\kappa}-24\right).
\end{aligned}
\end{equation*}
Therefore
\begin{equation*}
\begin{aligned}
(\ref{eq:prob-tri})&\ge \frac{\frac{1}{2}\left(\kappa^2-2\sqrt{\kappa}-24\right)}{\frac{\kappa^2}{2}-2x^2}\ge \frac{\kappa^2-2\sqrt{\kappa}-24}{\kappa^2-1}\\
&\ge 1-\frac{2}{\kappa^{1.5}}-\frac{24}{\kappa^2}.
\end{aligned}
\end{equation*}
Meanwhile, if we choose $x=\frac{1}{4}$, we can show that
\begin{equation*}
\begin{aligned}
(\ref{eq:prob-tri}) &\le  \frac{\left(-D\sqrt{1-2x}+\sqrt{D^2+(2\kappa-3D)D\kappa}\right)\cdot\left(\sqrt{\kappa+1}+\sqrt{1-2x}\right)}{2D\cdot (\kappa-D)}\\
&\le \frac{\left[-(\frac{\kappa}{2}+\frac{1}{4})\sqrt{\frac{1}{2}}+\sqrt{(\frac{\kappa}{2}+\frac{1}{4})^2+\kappa(\frac{\kappa}{2}+\frac{1}{4})(\frac{\kappa}{2}-\frac{3}{4})}\right]\cdot\left(\sqrt{\kappa+1}+\sqrt{\frac{1}{2}}\right)}{(\kappa+\frac{1}{2})(\frac{\kappa}{2}-\frac{1}{4})}\\
&= \frac{\left[-\sqrt{2}(\frac{\kappa}{2}+\frac{1}{4}) + \sqrt{\kappa^3+\frac{1}{4}\kappa+\frac{1}{4}}\right]\cdot\left(\sqrt{\kappa+1}+\sqrt{\frac{1}{2}}\right)}{\kappa^2-\frac{1}{4}}\\
&\le \frac{\left(-\frac{\kappa}{\sqrt{2}}-\frac{\sqrt{2}}{4}+\kappa^{1.5}+\frac{1}{2\sqrt{\kappa}}+\frac{1}{2\kappa^{1.5}}\right)\cdot\left(\sqrt{\kappa}+\frac{1}{2\sqrt{\kappa}}+\frac{1}{\sqrt{2}}\right)}{\kappa^2-\frac{1}{4}}\\
&\le \frac{\kappa^2-\frac{\sqrt{2}}{2}\sqrt{\kappa}+2}{\kappa^2-\frac{1}{4}} \le 1-\frac{\sqrt{2}}{2}\kappa^{-1.5}+\frac{9}{4}\kappa^{-2}.
\end{aligned}
\end{equation*}
Therefore
\begin{equation*}
    \min_{d \in [\frac{L}{2}, \frac{\mu+L}{2}]} \frac{d - \sqrt{\frac{2d(L - d)^2}{L + \mu - 2d} + d^2}}{(L - d)(1 - \sqrt{\frac{L + \mu}{L + \mu - 2d})}} = 1-\Theta(\kappa^{-1.5}).
\end{equation*}
Now let us focus on (\ref{eq:prob-rect}). Let $\kappa:=L/\mu$, $x:=a/\mu$.
\begin{equation*}
\begin{aligned}
(\ref{eq:prob-rect}) &\ge \min_{x\in[0,1]}\frac{-\frac{\kappa+1}{2}+\sqrt{\frac{x^2\kappa^2}{x^2-(\frac{\kappa-1}{2})^2}}}{-x + \frac{x \kappa}{\sqrt{x^2-(\frac{\kappa-1}{2})^2}}}= \min_{x\in[0,1]}\frac{x\kappa - \frac{\kappa+1}{2}\sqrt{x^2-\left(\frac{\kappa-1}{2}\right)^2}}{x\kappa - x\sqrt{x^2-\left(\frac{\kappa-1}{2}\right)^2}}\\
&\ge \min_{x\in[0,1]}\frac{1-\frac{\kappa+1}{2x\kappa}\sqrt{x^2-\left(\frac{\kappa-1}{2}\right)^2}}{1-\frac{1}{\kappa}\sqrt{x^2-\left(\frac{\kappa-1}{2}\right)^2}}.
\end{aligned}
\end{equation*}
Since $\frac{\kappa}{2}\le x\le \frac{\kappa+1}{2}$,
\begin{equation*}
\frac{\kappa+1}{2x\kappa}-\frac{1}{\kappa}\le \frac{\kappa+1}{\kappa^2}-\frac{1}{\kappa}=\frac{1}{\kappa^2}.
\end{equation*}
Thus
\begin{equation*}
\begin{aligned}
(\ref{eq:prob-rect}) &\ge \min_{x\in[0,1]}\left\{1 - \frac{\frac{1}{\kappa^2}\sqrt{x^2-\left(\frac{\kappa-1}{2}\right)^2}}{1-\frac{1}{\kappa}\sqrt{x^2-\left(\frac{\kappa-1}{2}\right)^2}}\right\}.
\end{aligned}
\end{equation*}
Assume that $\kappa\ge 4$. Then $$\frac{1}{\kappa}\sqrt{x^2-\left(\frac{\kappa-1}{2}\right)^2}\le\frac{1}{\kappa}\sqrt{\left(\frac{\kappa+1}{2}\right)^2-\left(\frac{\kappa-1}{2}\right)^2}=\frac{1}{\sqrt{\kappa}}\le \frac{1}{2},$$
and therefore
$$(\ref{eq:prob-rect})\ge 1-\frac{\frac{1}{\kappa\sqrt{\kappa}}}{1/2}=1-\frac{2}{\kappa^{1.5}}.$$
Meanwhile, if we choose $x = \frac{\kappa}{2}$, then 
\begin{equation*}
\begin{aligned}
(\ref{eq:prob-rect}) &\le \frac{-\frac{\kappa+1}{2}+\sqrt{(\frac{\kappa+1}{2})^2+\frac{\kappa^4}{2\kappa-1}-\frac{\kappa^2}{4}}}{-\frac{\kappa}{2}+\kappa^2\cdot\frac{1}{\sqrt{2\kappa-1}}}\\
&\le \frac{\sqrt{\kappa^4+\kappa^2-\frac{1}{4}}-\frac{\kappa+1}{2}\sqrt{2\kappa-1}}{\kappa^2-\frac{\kappa}{2}\sqrt{2\kappa-1}}\\
&\le \frac{\kappa^2+\frac{1}{2}-\frac{\kappa+1}{2}\sqrt{2\kappa-1}}{\kappa^2-\frac{\kappa}{2}\sqrt{2\kappa-1}}\\
&\le 1-\frac{\sqrt{2\kappa-1}-1}{2\kappa^2}.
\end{aligned}
\end{equation*}
In other words, we can show that
\begin{equation*}
    \hat\rho(\hat K_1)=1-\Theta(\kappa^{-1.5}), \quad \hat\rho(\hat K_2)=1-\Theta(\kappa^{-1.5}).
\end{equation*}

Recall that $\hat{\rho}(\hat{K})$ is the spectral radius of the augmented Jacobian $\rho(\bar{\jacobian})$ in~\eqref{eq:aug-nm}. In order to prove the local convergence claim in the theorem, we will use the following lemma to connect $\hat{\rho}(\hat{K})$ with the local asymptotic rate. This lemma follows by linearizing the vector field around $\bz^*$. Similar results can be found in many texts, see for instance,~Theorem 2.12~in~\citet{olver2015nonlinear}.
\begin{prop}[Local convergence rate from Jacobian eigenvalue]
\label{prop:local}
For a discrete dynamical system $\bz_{t+1} = G(\bz_t) = \bz_t - F(\bz_t)$, if the spectral radius $\rho(\jacobian_G(\bz^*))=1-\Delta<1$, then there exists a neighborhood $U$ of $\bz^*$ such that for any $\bz_0\in U$,
\begin{equation*}
\Vert \bz_t-\bz^*\Vert_2 \le C\left(1-\frac{\Delta}{2}\right)^t\Vert \bz_0-\bz^*\Vert_2,
\end{equation*}
where $C$ is some constant.
\end{prop}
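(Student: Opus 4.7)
The plan is to prove this by the standard linearization/adapted-norm technique for locally attracting fixed points of smooth maps. Setting $\bA := \jacobian_G(\bz^*)$, the fixed-point property $G(\bz^*) = \bz^*$ together with the continuous differentiability of $F$ (hence of $G$) at $\bz^*$ gives the Taylor expansion $G(\bz) - \bz^* = \bA(\bz - \bz^*) + r(\bz)$ with $\|r(\bz)\|_2 = o(\|\bz - \bz^*\|_2)$ as $\bz \to \bz^*$. The goal is to turn the spectral-radius information $\rho(\bA) = 1 - \Delta$ into a genuine per-step contraction that can be iterated.

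The key step is to switch from the Euclidean norm to a norm $\|\cdot\|_*$ on $\real^d$ adapted to $\bA$. Via the (real) Jordan form of $\bA$ and the standard trick of rescaling the super-diagonal entries by a small parameter $\epsilon$, for any $\epsilon > 0$ one obtains a norm $\|\cdot\|_*$, equivalent to $\|\cdot\|_2$ with constants $c_1, c_2 > 0$ satisfying $c_1 \|v\|_* \leq \|v\|_2 \leq c_2 \|v\|_*$, such that the induced operator norm satisfies $\|\bA\|_* \leq \rho(\bA) + \epsilon$. I will fix $\epsilon := \Delta/4$, so $\|\bA\|_* \leq 1 - 3\Delta/4$.

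With the adapted norm in hand, the rest is a one-step contraction argument. Because $\|\cdot\|_*$ and $\|\cdot\|_2$ are equivalent, the remainder estimate is also $\|r(\bz)\|_* = o(\|\bz - \bz^*\|_*)$, so there exists $\delta > 0$ such that on the $\|\cdot\|_*$-ball $U$ of radius $\delta$ around $\bz^*$ we have $\|r(\bz)\|_* \leq (\Delta/4)\|\bz - \bz^*\|_*$. Adding the linear and remainder bounds then yields $\|G(\bz) - \bz^*\|_* \leq (1 - \Delta/2)\|\bz - \bz^*\|_*$ throughout $U$. In particular $G(U) \subseteq U$, induction gives $\|\bz_t - \bz^*\|_* \leq (1 - \Delta/2)^t \|\bz_0 - \bz^*\|_*$ for all $t$, and translating back via norm equivalence gives the stated Euclidean bound with $C := c_2/c_1$.

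The main obstacle is really only the first step. In general $\|\bA\|_2$ can be strictly larger than $\rho(\bA)$---and indeed, for the $2d \times 2d$ augmented Jacobian $\bar{\jacobian}$ arising from the negative-momentum recurrence, one expects exactly this---so a Euclidean one-step contraction cannot be read off directly from the spectral radius, and the adapted norm is what makes the argument go through. An alternative route, if one prefers to avoid the Jordan-form construction, is to invoke the Gelfand bound $\|\bA^k\|_2 \leq C_0(\rho(\bA) + \epsilon)^k$ for $k$ large and run the induction in blocks of length $k$, absorbing $C_0$ and the blocking into the final constant $C$.
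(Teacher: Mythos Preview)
Your proposal is correct and follows essentially the same approach as the paper: the paper also invokes an adapted norm (citing Horn's Lemma 5.6.10, which is precisely the Jordan-form rescaling you describe) to get $\|\jacobian_G(\bz^*)\|_* < 1 - 3\Delta/4$, then uses the Taylor remainder to absorb another $\Delta/4$, obtains the one-step contraction $\|G(\bz)-\bz^*\|_* \le (1-\Delta/2)\|\bz-\bz^*\|_*$, iterates, and converts back to $\|\cdot\|_2$ via norm equivalence with $C=c_2/c_1$. Your write-up is in fact slightly more careful in noting $G(U)\subseteq U$ explicitly before iterating.
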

\begin{proof}
By Lemma 5.6.10~\citep{horn2012matrix}, since $\rho(\jacobian_G(\bz^*))=1-\Delta$, there exists a matrix norm $\Vert\cdot \Vert$ induced by vector norm $\Vert\cdot\Vert$ such that $\Vert \jacobian_G(\bz^*)\Vert < 1-\frac{3\Delta}{4}$. Now consider the Taylor expansion of $G(\bz)$ at the fixed point $\bz^*$:
\begin{equation*}
    G(\bz)=G(\bz^*)+\jacobian_G(\bz^*)(\bz-\bz^*)+R(\bz-\bz^*),
\end{equation*}
where the remainder term satisfies
\begin{equation*}
    \lim_{\bz\to\bz^*}\frac{R(\bz-\bz^*)}{\Vert \bz-\bz^*\Vert}=0.
\end{equation*}
Therefore, we can choose $0<\delta$ such that whenever $\Vert \bz-\bz^*\Vert<\delta$, $\Vert R(\bz-\bz^*)\Vert\le \frac{\Delta}{4}\Vert \bz-\bz^*\Vert.$ In this case,
\begin{equation*}
\begin{aligned}
    \Vert G(\bz)-G(\bz^*)\Vert &\le \Vert\jacobian_G(\bz^*)(\bz-\bz^*)\Vert+\Vert R(\bz-\bz^*)\Vert\\
    &\le \Vert\jacobian_G(\bz^*)\Vert \Vert \bz-\bz^* \Vert + \frac{\Delta}{4}\Vert \bz-\bz^*\Vert\\
    &\le \left(1-\frac{\Delta}{2}\right)\Vert \bz-\bz^*\Vert.
\end{aligned}
\end{equation*}
In other words, when $\bz_0\in U=\left\{\bz | \; \Vert \bz-\bz^*\Vert < \delta\right\}$,
\begin{equation*}
    \Vert \bz_t-\bz^*\Vert \le \left(1-\frac{\Delta}{2}\right)^t\Vert \bz_0-\bz^*\Vert.
\end{equation*}
By the equivalence of finite dimensional norms, there exists constants $c_1,c_2>0$ such that 
\begin{equation*}
    \forall \bz,\quad  c_1\Vert \bz\Vert_2\le \Vert \bz\Vert \le  c_2\Vert \bz\Vert_2.
\end{equation*}
Therefore
\begin{equation*}
    \Vert \bz_t-\bz^*\Vert_2 \le \frac{c_2}{c_1}\left(1-\frac{\Delta}{2}\right)^t\Vert \bz_0-\bz^*\Vert_2.
\end{equation*}
\end{proof}

\subsection{Proofs for Other Results}
\lemreduction*
\begin{proof}
    According to Lemma~\ref{lem:rho-regime}, we have the $\rho$-convergence region for momentum method with $\eta, \beta$:
    \begin{equation*}
        S(\eta, \beta, \rho) = \left\{\lambda \in \comp: \frac{(1 - \eta \realp \lambda + \beta)^2}{(1 + \beta/\rho^2)^2} + \frac{(\eta \imag \lambda)^2}{(1 - \beta/\rho^2)^2} \leq \rho^2   \right\}
    \end{equation*}
    The convergence rate for any particular choice of $\eta, \beta$ is the smallest $\rho$ such that $S(\eta, \beta, \rho)$ tightly covers the region $K$. By setting $d = \frac{1 + \beta}{\eta}$ and $c^2 = \frac{4 \beta}{\eta^2}$, one can show that Chebyshev iteration have the same rate on $S(\eta, \beta, \rho)$ as we can transform $S(\eta, \beta, \rho)$ to
    \begin{equation*}
        \left\{\lambda \in \comp: \frac{(\realp \lambda - d)^2}{(\frac{\rho}{\eta} + \frac{\beta}{\eta \rho})^2} + \frac{(\imag \lambda)^2}{(\frac{\rho}{\eta} - \frac{\beta}{\eta \rho})^2} \leq 1   \right\}
    \end{equation*}
    with $c^2 = (\frac{\rho}{\eta} + \frac{\beta}{\eta \rho})^2 - (\frac{\rho}{\eta} - \frac{\beta}{\eta \rho})^2 = \frac{4\beta}{\eta^2}$. On the other side for any Chebyshev iteration with parameters $d, c^2 \in \real$, we can take $\eta = 2\frac{d - \sqrt{d^2 - c^2}}{c^2}$ and $\beta = d \eta - 1$.
\end{proof}

\lemalter*
\begin{proof}
To prove this Lemma, we first recall the Alternative theorem from functional analysis (see e.g.~\citet{bartle1964elements}):
\begin{theorem}[Alternative theorem]
    If $\{f_i(x, y) \}$ is a finite set of real valued functions of two real variables, each of which is continuous on a closed and bounded region $S$ and we define
    \begin{equation*}
        m(x, y) = \max_i f_i(x, y)
    \end{equation*}
    then $m(x, y)$ takes on a minimum at some point $(x^*, y^*)$ in the region $S$. If $(x^*, y^*)$ is in the interior of $S$, then one of the following hold:
    \begin{enumerate}
        \item The point $(x^*, y^*)$ is a local minimum of $f_i(x, y)$ for some $i$ such that $m(x^*, y^*) = f_i (x^*, y^*)$.
        \item The point $(x^*, y^*)$ is a local minimum of among the locus $\{(x, y) \in S| f_i (x, y) = f_j(x, y) \}$ for some $i$ and $j$ such that $m(x^*, y^*) = f_i (x^*, y^*) = f_j(x^*, y^*)$.
        \item The point $(x^*, y^*)$ is such that for some $i, j$ and $k$ such that $m(x^*, y^*) = f_i (x^*, y^*) = f_j(x^*, y^*) = f_k(x^*, y^*)$.
    \end{enumerate}
\end{theorem}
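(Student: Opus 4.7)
The plan is to establish existence of a minimum by compactness and then obtain the trichotomy from a case analysis on the size of the active set at an interior minimizer. The key observation is that no differentiability is available, so one must work only with the continuity of each $f_i$ together with the structural fact that $m$ is the pointwise maximum of finitely many such functions. This forces the proof to be essentially a continuity/topology argument rather than a variational one.

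For existence, $m(x,y)$, being the pointwise maximum of finitely many continuous functions, is itself continuous on the closed bounded region $S$; by the extreme value theorem it attains its infimum at some $(x^*, y^*)\in S$. Now assume $(x^*,y^*)$ lies in the interior of $S$, set $m^* := m(x^*,y^*)$, and define the active set $A := \{i : f_i(x^*,y^*) = m^*\}$. By continuity of each $f_k$, there is an open neighborhood $U\subset S$ of $(x^*,y^*)$ and an $\varepsilon>0$ such that $f_k(x,y) < m^* - \varepsilon$ whenever $k\notin A$ and $(x,y)\in U$; in particular, $m(x,y) = \max_{i\in A} f_i(x,y)$ on $U$.

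The trichotomy follows from the size of $A$. If $|A|=1$, say $A=\{i\}$, then $m\equiv f_i$ on $U$, so $(x^*,y^*)$ is a local minimum of $f_i$ in $S$, giving conclusion 1. If $|A|=2$, say $A=\{i,j\}$, consider the locus $L_{ij}=\{(x,y)\in S : f_i(x,y)=f_j(x,y)\}$; on $L_{ij}\cap U$ we have $f_i = f_j = \max(f_i,f_j) = m$, and the minimality of $m$ at $(x^*,y^*)$ then forces $f_i(x,y)\ge f_i(x^*,y^*)$ for all $(x,y)\in L_{ij}\cap U$, giving conclusion 2. If $|A|\ge 3$, pick any three indices $i,j,k\in A$; by definition $f_i(x^*,y^*)=f_j(x^*,y^*)=f_k(x^*,y^*)=m^*$, giving conclusion 3.

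The only nontrivial step, and hence the main obstacle, is the $|A|=2$ case: one must conclude that $(x^*,y^*)$ is a local minimum of $f_i$ \emph{along} the locus $L_{ij}$ without assuming any smoothness of $f_i,f_j$ or regularity of $L_{ij}$ as a manifold. The argument above sidesteps this by pushing the unconstrained local minimality of $m$ through the pointwise identity $m = f_i$ on $L_{ij}\cap U$; no implicit function theorem or differentiability is needed, which matches the continuity-only hypotheses of the statement. A minor bookkeeping step, handled by the choice of $\varepsilon$, is ensuring that inactive indices remain inactive throughout a whole neighborhood so that the local identification $m=\max_{i\in A}f_i$ is valid on $U$.
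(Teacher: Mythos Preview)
The paper does not actually prove this statement: it merely recalls the Alternative theorem, attributes it to \citet{bartle1964elements}, and then applies it to establish Lemma~\ref{lem:alternative-thm}. So there is no proof in the paper to compare against; your proposal supplies what the paper omits.

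Your argument is correct. The existence of a minimizer follows from the extreme value theorem, and your active-set decomposition is the natural way to obtain the trichotomy. The only step worth scrutinizing is the $|A|=2$ case, and you handle it cleanly: on $L_{ij}\cap U$ the identity $m=f_i=f_j$ holds because the inactive indices have been pushed strictly below $m^*$ on $U$, and then the global minimality of $m$ at $(x^*,y^*)$ transfers directly to $f_i$ restricted to the locus. No smoothness of the $f_i$ or regularity of $L_{ij}$ is invoked, which matches the continuity-only hypotheses. Your proof is more elementary and self-contained than deferring to a reference, and it makes transparent why the alternatives are indexed by the number of active functions at the minimizer---exactly the structure the paper later exploits when it notes that neither surface alone has its minimum at the optimal $(d^*,{c^2}^*)$ and concludes the minimax value must sit on their intersection.
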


Here we take the triangle $H_1$ as the example. Recall we have the min-max problem:
\begin{equation*}
    \min_{d, c^2 \in \real} \max \left\{r(L; d, c^2), r(\mu + \sqrt{L^2 - \mu^2}i; d, c^2)  \right\}
\end{equation*}
It is obvious that the solution to the min-max problem lies in the open region of $\real$ and there is some compact set $S \subset \real$ which contains the solution in its interior. Therefore, we can apply the Alternative theorem.
It is easily shown that 
\begin{equation*}
\begin{aligned}
    r(L; L, 0) &< r(\mu + \sqrt{L^2 - \mu^2}i; L, 0) \\
    r(L; \mu, \mu^2 - L^2) &> r(\mu + \sqrt{L^2 - \mu^2}i; \mu, \mu^2 - L^2) \\
\end{aligned}
\end{equation*}
Since there is only one local minimum on each surface, the Alternative theorem yields that the solution must occur along the intersection of the two surfaces. Therefore, we finish the proof.
\end{proof}

\end{document}